\documentclass[12pt]{amsart}
\title{Building trees in large fields}
\author{Elliot Kaplan}
\address{Universit\'{e} de Mons, D\'{e}partement de Math\'{e}matique, Belgium}
\email{elliot.kaplan@umons.ac.be}

\author{Angus Matthews}
\address{School of Mathematics, University of Leeds, United Kingdom}
\email{mmamat@leeds.ac.uk}

\author{Erik Walsberg}
\address{Universit\"{a}t Wien, Institut f\"{u}r Mathematik, Kurt G\"{o}del Research Center, Austria}
\email{erik.walsberg@gmail.com}

\usepackage{amsmath, amssymb, amsthm, enumitem, comment}    
\usepackage[mathscr]{euscript}
\usepackage[Symbol]{upgreek}
\usepackage{fullpage} 	
\usepackage{hyperref}
\usepackage{lineno}
\usepackage[all]{xy}
\usepackage{centernot}
\usepackage{tikz-cd}
\usepackage{xspace}
\usepackage{enumitem}
\usepackage{mathtools}
\usepackage{cleveref}
\usepackage[T1]{fontenc}

\DeclareFontFamily{U}{fsy}{}
\DeclareFontShape{U}{fsy}{m}{n}{<->s*[.9]psyr}{}
\DeclareSymbolFont{der@m}{U}{fsy}{m}{n}
\DeclareMathSymbol{\der}{\mathord}{der@m}{182}

\DeclareFontFamily{U}{BOONDOX-calo}{\skewchar\font=45 }
\DeclareFontShape{U}{BOONDOX-calo}{m}{n}{
  <-> s*[1.05] BOONDOX-r-calo}{}
\DeclareFontShape{U}{BOONDOX-calo}{b}{n}{
  <-> s*[1.05] BOONDOX-b-calo}{}
\DeclareMathAlphabet{\mathcalboondox}{U}{BOONDOX-calo}{m}{n}
\SetMathAlphabet{\mathcalboondox}{bold}{U}{BOONDOX-calo}{b}{n}
\DeclareMathAlphabet{\mathbcalboondox}{U}{BOONDOX-calo}{b}{n}

\DeclareSymbolFont{imag@m}{OT1}{cmr}{m}{ui}
\DeclareMathSymbol{\imag}{\mathord}{imag@m}{105}

\DeclareMathOperator*{\forkindep}{\raise0.2ex\hbox{\ooalign{\hidewidth$\vert$\hidewidth\cr\raise-0.9ex\hbox{$\smile$}}}}

\newcommand{\Sa}[1]{\ensuremath{\mathscr{#1}}}

\newcommand{\aff}{\operatorname{Aff}}

\newcommand{\Gal}{\operatorname{Gal}}
\newcommand{\Spec}{\operatorname{Spec}}

\newcommand{\Aut}{\operatorname{Aut}}

\newcommand{\tp}{\operatorname{tp}}

\newcommand{\Chara}{\operatorname{Char}}

\newcommand{\prc}{\mathrm{PRC}}
\newcommand{\pac}{\mathrm{PAC}}

\newcommand{\cY}{\mathcal{Y}}
\newcommand{\cZ}{\mathcal{Z}}

\newtheorem*{fact*}{Fact}
\newtheorem*{Claim*}{Claim}
\newtheorem*{theorem*}{Theorem}
\newtheorem{theorem}{Theorem}[section] 
\newtheorem{lemma}[theorem]{Lemma}

\newtheorem{prop-def}[theorem]{Proposition-Definition}
\newtheorem{corollary}[theorem]{Corollary}
\newtheorem{fact}[theorem]{Fact}
\newtheorem{fact-eh}[theorem]{Fact(?)}

\newtheorem{proposition-eh}[theorem]{Proposition(?)}
\newtheorem*{theorem-star}{Theorem}
\newtheorem*{conjecture-star}{Conjecture}
\newtheorem*{lemma-star}{Lemma}

\newtheorem{thmA}{Theorem}

\newtheorem{corA}{Corollary}

\newtheorem*{lemma6.2alt}{Lemma \ref*{confusing-v2}$'$}

\theoremstyle{definition}

\theoremstyle{remark}

\newcommand{\Aa}{\mathbb{A}}

\newcommand{\Qq}{\mathbb{Q}}

\newcommand{\Zz}{\mathbb{Z}}
\newcommand{\Nn}{\mathbb{N}}

\DeclareTextSymbol{\thh}{T1}{254}
\def\th{\textnormal{\thh}}

\newenvironment{claimproof}[1][\proofname]
               {
                 \proof[#1]
                 
               }
               {
                 \endproof
               }

\begin{document}

\begin{abstract}
We show that large rosy fields are bounded, and substantially simplify the proofs that large stable fields are separably closed and that large simple fields are bounded.
Our proofs go through in a general topological setting.
We obtain instability, non-simplicity, and non-rosiness explicitly by building the appropriate trees of definable sets.
We also show that orders on large rosy fields have several properties of orders on pseudo real closed fields.
\end{abstract}

\maketitle

\section{Introduction} 
Throughout $K$ is a field.
Recall that $K$ is {\bf large} if any $K$-curve with a smooth $K$-point has infinitely many $K$-points, $K$ is {\bf virtually large} if some finite extension of $K$ is large, and $K$ is {\bf bounded} if $K$ has only finitely many separable extensions of any fixed finite degree.
Equivalently $K$ is bounded if the absolute Galois group of $K$ depends only on the first-order theory of $K$.
All known logically tame infinite fields are virtually large and almost all of them are large.
Several intractable conjectures from the model theory of fields have been proven for large or virtually large fields.
In particular, virtually large stable fields are separably closed~\cite{firstpaper} and large fields with simple theory are bounded~\cite{with-anand}.
We simplify the proof of the first result and generalize the second to virtually large rosy fields.
The model-theoretic part of the proofs goes through in a more general setting.

\begin{thmA}\label{thm:elastic}
Suppose that $\Sa M$ is a first-order structure equipped with a topology $\uptau$ on some $M^n$.
Let $\Gamma$ be a $2$-elastic definable family of homeomorphisms $M^n \to M^n$.
\begin{enumerate}[leftmargin=*]
\item\label{B_stable} If $\Sa M$ is stable  then any definable subset of $M^n$ with interior is dense.
\end{enumerate}
Suppose in addition that $\Sa M$ is sufficiently saturated, $\Gamma$ is elastic, and $\uptau$ is $\Aut(\Sa M)$-invariant.
\begin{enumerate}[leftmargin=*]
\setcounter{enumi}{1}
\item\label{B_rosy} If $\Sa M$ is rosy then no definable subset of $M^n$ with interior \textup{\thh}-divides.
\item\label{B_simple} If $\Sa M$ is simple then no definable subset of $M^n$ with interior divides.
\end{enumerate}
\end{thmA}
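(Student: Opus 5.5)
The plan for all three parts is the same. Assuming $X\subseteq M^n$ is a definable set with interior that violates the conclusion, I will use the elasticity of $\Gamma$ to push the open sets witnessing this behaviour around, and build from the family of translates $\{\gamma(X) : \gamma \in \Gamma\}$ a tree of definable sets of the kind forbidden by stability, simplicity, or rosiness. Throughout I use $k$-elasticity in the following form: for nonempty open $V_1,\dots,V_k$ and $W_1,\dots,W_k$ there is $\gamma\in\Gamma$ with $\gamma(V_i)\cap W_i\neq\emptyset$ for all $i$. The essential point is that $\Gamma$ is a \emph{definable} family, so $x\in\gamma_z(X)$ is a single formula $\theta(x;z)$ with $z$ ranging over codes for $\Gamma$.

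For part~(1), suppose $X$ has nonempty interior $U$ but is not dense. Then there is a nonempty open $W$ with $W\cap X=\emptyset$. I will build, by induction on $n$, a binary tree $(O_s)_{s\in 2^{\le n}}$ of nonempty open sets together with codes $(c_s)_{s\in 2^{<n}}$ such that:
\begin{itemize}
\item $O_{s0},O_{s1}\subseteq O_s$;
\item $O_{s0}\subseteq \gamma_{c_s}(U)\subseteq\gamma_{c_s}(X)$;
\item $O_{s1}\subseteq\gamma_{c_s}(W)$, which is disjoint from $\gamma_{c_s}(X)$.
\end{itemize}
The splitting step is exactly $2$-elasticity. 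Given $O_s$, pick $\gamma$ with $\gamma(U)\cap O_s\neq\emptyset$ and $\gamma(W)\cap O_s\neq\emptyset$, and set $O_{s0}=O_s\cap\gamma(U)$ and $O_{s1}=O_s\cap\gamma(W)$. These are open because $\gamma$ is a homeomorphism. A point of a leaf $O_t$ then satisfies $\theta(x;c_{t|i})$ exactly when $t(i)=0$. So $\theta$ has binary trees of every finite depth, and by compactness it has the binary tree property. This contradicts stability. No saturation or invariance is needed here, consistent with the hypotheses of (1).

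For parts~(2) and~(3), write $X=\varphi(M,b)$ and suppose $X$ divides (respectively \th-divides) over $A$, witnessed by an $A$-indiscernible sequence $(b_i)$ in $\tp(b/A)$ with $\{\varphi(x,b_i)\}$ $k$-inconsistent. In the \th-dividing case $(b_i)$ ranges over an $\mathrm{acl}^{eq}(A)$-indiscernible, or $A$-definable equivalence class, configuration. Because $\uptau$ is $\Aut(\Sa M)$-invariant, each $X_i=\varphi(M,b_i)$ has nonempty interior $U_i$, and $b_i\mapsto U_i$ can be chosen uniformly. By $k$-elasticity applied to $U_1,\dots,U_k$ and a fixed open $V$, there are $\gamma_1,\dots,\gamma_k$ making the translates $\gamma_i(X_i)$ meet $V$. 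Iterating, as in part~(1) but now shrinking a common open set rather than splitting, makes $\bigcap_{i\le k}\gamma_i(X_i)\neq\emptyset$. I will then pass to the formula $\psi(x;y,z)\colon x\in\gamma_z(\varphi(M,y))$. The goal is to combine the two phenomena: inconsistency along $(b_i)$ for fixed $z$, and consistency after moving by suitable $\gamma$'s. Alternating them level by level should give a tree witnessing the tree property of $\psi$ in the simple case, and the corresponding \th-tree, i.e.\ unbounded \th-forking chains, in the rosy case. Full elasticity rather than $2$-elasticity is needed because $k$ is arbitrary, and saturation is used to realize the trees by compactness.

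The main obstacle is this last combinatorial step. The codes $z$ produced by elasticity depend on the $b_i$, so it is not automatic that the pairs $(b_i,c_i)$ still form an indiscernible sequence witnessing dividing of $\psi$. What I would try first is to choose the codes $c$ generically over everything built so far. I would also apply Ramsey and compactness to extract an $A$-indiscernible sequence of pairs $(b_i,c_i)$ preserving both the $k$-inconsistency of the untranslated instances and the nonempty intersections of translates. Invariance of $\uptau$ should ensure that these interior and intersection properties are type-definable enough to survive the extraction.
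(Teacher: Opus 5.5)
Your part (1) is correct and is the paper's argument. $2$-elasticity, with target the current node and sources the interior of $X$ and an open set disjoint from $X$, splits every node and gives binary trees of any depth for $x\in\gamma(X)$. Your restatement of $k$-elasticity with separate targets $W_i$ is topological $k$-transitivity, which is in general stronger than the definition. You only ever use a common target, so this is harmless.

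For parts (2) and (3) there is a genuine gap: no tree is actually built, and the obstacle you flag is left open. Your proposed fix (generic codes, Ramsey, extracting an indiscernible sequence of pairs $(b_i,c_i)$) is both unjustified and unnecessary.
\begin{itemize}
\item It is unjustified because ``has interior'' is only $\Aut(\Sa M)$-invariant, not type-definable. Nothing guarantees it survives an Ehrenfeucht--Mostowski extraction.
\item It is unnecessary because the tree property, and infinite \th-rank, only ask for $k$-inconsistent siblings and consistent paths. The parameters need not be indiscernible.
\end{itemize}

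The missing idea is to use a \emph{single} $\gamma$ at each node for all of its children. At a node with nonempty open set $O$, take $m$ members $Y_1,\dots,Y_m$ of the $k$-inconsistent family witnessing (\th-)dividing; each has interior by invariance. Apply $m$-elasticity with target $O$ to get one $\gamma$ such that every $\gamma(Y_j)\cap O$ has interior.
\begin{itemize}
\item For dividing, the $Y_j$ are $\varphi(M,b_1),\dots,\varphi(M,b_m)$ from your sequence.
\item For \th-dividing, first use compactness to get an $A$-definable $Z_0\ni z_0$ with $(Y_z)_{z\in Z_0}$ $k$-inconsistent. This is what the paper does. Then take distinct realizations of $\tp(z_0/A)$.
\end{itemize}
Since $\gamma$ is a bijection, the translated family stays $k$-inconsistent however $\gamma$ was chosen, so the dependence of codes on parameters is harmless. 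Each child meets $O$ in a set with interior, so paths stay consistent. Finite trees of every width and depth plus compactness then give the tree property for $x\in\gamma(Y_z)$. In the rosy case they give infinite \th-rank of $M^n$ for $(\gamma(Y_z))_{(\gamma,z)}$ with $\cZ=(\{\gamma\}\times Z_0)_{\gamma\in\Gamma}$.

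Your intermediate step, choosing distinct $\gamma_1,\dots,\gamma_k$ to make $\bigcap_i\gamma_i(X_i)$ nonempty, points the wrong way. Siblings must share one $\gamma$ to inherit $k$-inconsistency, and providing that shared $\gamma$ is exactly what full elasticity is for.
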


We say that such $\uptau$ is {\bf $\Aut(\Sa M)$-invariant} if for any $\sigma \in \Aut(\Sa M)$, the map $\sigma^n \colon M^n \to M^n$ given by applying $\sigma$ coordinate-wise is a homeomorphism.
Let $k$ be a positive integer and $X$ be a topological space.
A family $\Gamma$ of homeomorphisms $X \to X$ is {\bf $k$-elastic} if for any nonempty open sets $U, V_1, \ldots, V_k\subseteq X$, there is $\gamma \in \Gamma$ such that each $\gamma(V_i)$ intersects $U$.
It is \textbf{elastic} if it is $k$-elastic for each $k \ge 1$.
The notion of $k$-elasticity was introduced for group actions in~\cite{CKN07} as a weakening of topological $k$-transitivity.
Note that $1$-elasticity is just topological transitivity.

\medskip
Let $\uptau$ be a topology on $K^n$.
Let $\aff_n(K)$ be the group of invertible affine transformations of $K^n$.
We say that $\uptau$ is \textbf{affine} if every $h \in \aff_n(K)$ is a homeomorphism.
We will see that if $K$ is infinite and $\uptau$ is affine and non-discrete then the action of $\aff_n(K)$ on $K^n$ is elastic.
Of course $2$-elasticity follows from $2$-transitivity, but full elasticity requires a short argument, see Lemma~\ref{lem:affine-stuffing}.
Thus Theorem~\ref{thm:elastic} yields the following, where $\Sa K$ is an expansion of $K$.

\begin{thmA}\label{thm:fields}
Suppose that $\uptau$ is a non-discrete affine topology on $K^n$.
\begin{enumerate}[leftmargin=*]
\item If $\Sa K$ is stable then any definable subset of $K^n$ with interior is dense.
\end{enumerate}
Suppose in addition that $\Sa K$ is sufficiently saturated and $\uptau$ is $\Aut(\Sa K)$-invariant.
\begin{enumerate}[leftmargin=*]
\setcounter{enumi}{1}  
\item\label{fields_rosy} If $\Sa K$ is rosy  then a definable subset of $K^n$ with interior cannot \textup{\thh}-divide.
\item If $\Sa K$ is simple then a definable subset of $K^n$ with interior cannot divide.
\end{enumerate}
\end{thmA}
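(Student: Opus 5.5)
Theorem~\ref{thm:fields} reduces to Theorem~\ref{thm:elastic} with $\Sa M = \Sa K$ and $\Gamma = \aff_n(K)$. So the plan is to check that the hypotheses of Theorem~\ref{thm:elastic} hold, and that $\Gamma$ is elastic.

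Two hypotheses are immediate. First, $\aff_n(K)$ is a definable family of homeomorphisms of $K^n$. It is parametrized by $\mathrm{GL}_n(K)\times K^n$, a definable set in the field language and hence in $\Sa K$. Each member is a homeomorphism because $\uptau$ is affine. Second, the remaining hypotheses (saturation, $\Aut(\Sa K)$-invariance, and stability, rosiness or simplicity) carry over verbatim. Given elasticity, part (1) follows from Theorem~\ref{thm:elastic}(\ref{B_stable}), and parts (2) and (3) from Theorem~\ref{thm:elastic}(\ref{B_rosy}) and (\ref{B_simple}).

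The real work is to show that the action of $\aff_n(K)$ on $K^n$ is $k$-elastic for every $k$; this is what Lemma~\ref{lem:affine-stuffing} will establish. Fix nonempty open sets $U, V_1,\dots,V_k$. We need an affine $\gamma$ with $\gamma(V_i)\cap U\neq\emptyset$ for all $i$. The plan is to pick points $v_i\in V_i$ and choose $\gamma$ sending all the $v_i$ into $U$. Two points of $V_i$ could coincide across different $i$, but that causes no problem.

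\begin{enumerate}[leftmargin=*]
\item Since $\uptau$ is non-discrete and translations are homeomorphisms, no point is isolated. Hence each nonempty open set is infinite.
\item Use infinitude to choose the $v_i$ in general position. If $k\le n+1$, choose them affinely independent one at a time. This needs each $V_i$ not to lie in a proper affine subspace spanned by the previous points. That requires more than infinitude: an open set could sit inside a line if $\uptau$ is strange.
\item In the affinely independent case, pick any distinct points $u_1,\dots,u_k\in U$, which exist by step 1. If these can also be taken affinely independent, there is an affine $\gamma$ with $\gamma(v_i)=u_i$.
\end{enumerate}

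The main obstacle is $k>n+1$, together with the degenerate geometry in steps 2 and 3. For this I would induct on $k$, using that $2$-transitivity gives the case $k=2$ and that affine maps move open sets to open sets. Suppose $\gamma_0$ already makes $\gamma_0(V_1),\dots,\gamma_0(V_{k-1})$ meet $U$. Replace $U$ by $U' = U\cap\bigcap_{i<k}\gamma_0(V_i)$; this may be empty, so instead shrink each $V_i$ to $V_i' = V_i\cap\gamma_0^{-1}(U)$ for $i<k$. Then look for $\gamma$ near $\gamma_0$ that also moves $V_k$ into $U$. Topology on $\aff_n(K)$ is unavailable, so use the following trick. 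Apply $2$-elasticity to the open sets $U'' = \gamma_0(V_1')\cap U$ and $\gamma_0(V_k)$, obtaining an affine $\delta$ with both $\delta(U'')\cap U''$ and $\delta(\gamma_0 V_k)\cap U''$ nonempty. Then $\gamma = \delta\gamma_0$ handles $V_1$ and $V_k$.

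To keep control of all $k$ sets simultaneously, the cleanest route I expect is a dilation and translation argument. Choose a point $p$, and for each $i$ take $\gamma$ of the form $x\mapsto p + \lambda(x-p)$. As $\lambda$ varies over the infinite set $K^\times$, such maps should push points of all $V_i$ toward a common open set. Pinning this down using only non-discreteness is where I expect the difficulty. If it fails, I would apply Lemma~\ref{lem:affine-stuffing} as stated, since the paper states it before this theorem.
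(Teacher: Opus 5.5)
Your reduction is exactly the paper's: Theorem~\ref{thm:fields} is Theorem~\ref{thm:elastic} applied to $\Gamma=\aff_n(K)$, which is definable (parametrized by $\mathrm{GL}_n(K)\times K^n$) and consists of homeomorphisms because $\uptau$ is affine, with full elasticity supplied by Lemma~\ref{lem:affine-stuffing}, so once you cite that lemma the proof is complete. Your own attempts at elasticity are unnecessary and do not work as written: the inductive step keeps only $V_1$ and $V_k$, and keeping all $k$ sets would need $k$-elasticity again, while a scalar $\lambda\in K$ carries no topology in which the conditions $\lambda v_i\in U$ are open and jointly satisfiable. The paper instead uses Lemma~\ref{lem:affine}, which is the general-position fact your step 2 lacks, to pick $g_i\in V_i\cap(K^\times)^n$; it then treats these as commuting diagonal matrices and, with $0\in U$ after translating, takes $h\in(K^\times)^n\cap\bigcap_i g_i^{-1}(U)$, so that $h(g_i)=g_i(h)\in U$.
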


Of course, this is only useful if we have non-discrete invariant affine topologies.
We consider the \'etale-open (or $\Sa E_K$-) topology on $K^n$ introduced in~\cite{firstpaper}.
See Section~\ref{section:B} for the definition.
The $\Sa E_K$-topology on $K^n$ is $K$-invariant by definition, is discrete if and only if $K$ is not large, and is affine as any polynomial map is continuous.
The $\Sa E_K$-topology admits a basis of definable sets and it is shown in~\cite{firstpaper} that it is not Hausdorff if and only if $K$ is  separably closed if and only it agrees with the Zariski topology.
Note that this gives a disjoint pair of nonempty definable open subsets of $K$ when $K$ is not separably closed.
Thus, it follows from Theorem~\ref{thm:elastic} that  non-separably closed large fields are unstable.
From this, it is easy to show that non-separably closed virtually large fields are unstable as well.
Furthermore, it is shown in~\cite{large->henselian} that if $f \in K[x]$ is separable, irreducible, and of degree at least $2$, then $f(K)$ and $K \setminus f(K)$ both have interior.
Combining with the proof of Theorem~\ref{thm:elastic}, we get an explicit unstable formula.

\begin{corA}\label{cor:unstable}
If $K$ is a large field and $f \in K[y]$ is  separable, irreducible, and of degree at least $2$, then the following formula $\varphi(x;s,t)$ is unstable $$\exists y [f(y) = sx + t].$$
\end{corA}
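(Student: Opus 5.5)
We will exhibit the order property directly, by building an infinite ladder for $\varphi(x;s,t)$ out of two disjoint open sets and affine maps. Work in the $\Sa E_K$-topology on $K$. Since $K$ is large, this topology is non-discrete. By the result of \cite{large->henselian} quoted above, both $A := f(K)$ and $B := K \setminus f(K)$ have nonempty $\Sa E_K$-interior; let $U \subseteq A$ and $V \subseteq B$ be nonempty open sets. For $(s,t) \in K^\times \times K$ write $\gamma_{s,t}(x) = sx+t$. Then $\varphi(a;s,t)$ holds if and only if $\gamma_{s,t}(a) \in A$. Since $A$ and $B$ partition $K$, it holds whenever $\gamma_{s,t}(a) \in U$, and it fails whenever $\gamma_{s,t}(a) \in V$.

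The aim is to produce, for each $n$, elements $a_1,\dots,a_n \in K$ and parameters $(s_1,t_1),\dots,(s_n,t_n)$ with $s_j \neq 0$ such that
\[
\gamma_{s_j,t_j}(a_i) \in U \text{ for } i \le j, \qquad \gamma_{s_j,t_j}(a_i) \in V \text{ for } i > j .
\]
This gives the order property, so $\varphi$ is unstable. Equivalently, we want the $a_i$ to satisfy
\[
a_i \in \bigcap_{j \ge i} \gamma_j^{-1}(U) \cap \bigcap_{j < i} \gamma_j^{-1}(V).
\]

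We build the ladder by induction, following the tree construction in the proof of Theorem~\ref{thm:elastic}(1). The key invariant is stronger than stated above: we keep nonempty open sets $W_1,\dots,W_n$ rather than single points, with the displayed conditions holding for every point of $W_i$. Each $\gamma_{s,t}$ is an affine homeomorphism, so all the sets involved are open.

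For the inductive step, suppose we have open sets $W_1,\dots,W_n$ and maps $\gamma_1,\dots,\gamma_n$. Apply $2$-elasticity, which by Lemma~\ref{lem:affine-stuffing} holds here and in fact follows from $2$-transitivity of $\aff_1(K)$. Concretely, pick a point $p \in W_1$ and a point $q \in W_n$; if $n = 1$, first shrink $W_1$ so that it contains two distinct points and use those. Choose an affine $\gamma_{n+1}$ with $\gamma_{n+1}(p) \in U$ and $\gamma_{n+1}(q) \in V$. This is possible because $\aff_1(K)$ acts $2$-transitively on $K$.

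This single map is not enough, because every $W_i$ with $i \le n$ must land in $U$ under $\gamma_{n+1}$. The fix is to place all the new conditions on one "side". Put
\[
W_i' = W_i \cap \gamma_{n+1}^{-1}(U) \quad (i \le n), \qquad W_{n+1} = W_n \cap \gamma_{n+1}^{-1}(V).
\]
Then we need each $W_i'$ and $W_{n+1}$ to be nonempty, which requires $\gamma_{n+1}(W_i) \cap U \neq \emptyset$ for all $i \le n$ simultaneously together with $\gamma_{n+1}(W_n) \cap V \neq \emptyset$. We get this from full elasticity, arranged as follows. First use elasticity (Lemma~\ref{lem:affine-stuffing}) to find $\delta$ with $\delta(W_i) \cap U \neq \emptyset$ for all $i$. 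Then pick a point $w \in W_n$ with $\delta(w) \in U$, and split it off: choose a small neighbourhood and compose $\delta$ with a further adjustment so that a subneighbourhood of $w$ maps into $V$.

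The main obstacle is exactly this compatibility: simultaneously hitting $U$ from all old sets and $V$ from a new piece of $W_n$. The cleaner route is to avoid it by building a binary tree as in Theorem~\ref{thm:elastic}, which only needs $2$-elasticity. At each step we split one open set into a part sent into $U$ and a part sent into $V$ by a single affine map, using $2$-transitivity on two points of that set. Sequences in the resulting tree then witness that $\varphi$ has the tree property characteristic of instability, equivalently the independence or order property after a standard compactness argument. I would carry out that binary-tree version, checking that at each node the two points of the current open set can be separated by an affine map into $U$ and $V$ respectively, and shrinking to open neighbourhoods by continuity.
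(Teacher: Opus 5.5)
Your final binary-tree route is correct and is exactly the paper's argument. The paper runs the proof of Theorem~\ref{thm:elastic}(1) with $X=f(K)$: at each node it splits the current open set by a single affine map into pieces landing in the interiors of $f(K)$ and $K\setminus f(K)$ (using $2$-elasticity, which you obtain from $2$-transitivity of $\aff_1(K)$ on two points of that set), and then concludes by Fact~\ref{fact:stable}.

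Drop the direct ladder attempt. Its stated invariant contradicts its own construction: you take $W_{n+1}\subseteq W_n\subseteq\gamma_n^{-1}(U)$, although the ladder requires $\gamma_n(a_{n+1})\in V$. Its ``further adjustment'' step is also never justified. Finally, the tree gives instability directly by Fact~\ref{fact:stable}; it is not equivalent to the independence property.
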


Note this is sharp in that any quantifier-free formula in a field is stable.
We now consider boundedness.
It is shown in~\cite[Section~4]{with-anand} that for any $n \ge 2$ there is a definable $\Sa E_K$-open subset $U$ of $K^n$ and a definable equivalence relation $E$ on $U$ with $\Sa E_K$-open classes such that $E$-classes correspond to separable extensions of degree $n$.
It therefore follows from Theorem~\ref{thm:fields}\eqref{fields_rosy} that large rosy fields are bounded.
(Recall that simple structures are rosy.)
The generalization to virtually large fields is again immediate.

\begin{corA}\label{cor:rosy_bounded}
Any virtually large rosy field is bounded.
\end{corA}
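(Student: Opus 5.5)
The plan is to handle large rosy fields first and then pass to virtually large ones by a finite-extension argument.

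\emph{Large case.} Suppose $K$ is large and rosy, and fix $n \ge 2$. Boundedness is preserved under elementary equivalence, since it is a property of the absolute Galois group and that is determined by the theory. Rosiness is likewise a property of the theory, and largeness is elementary. So we may replace $K$ by a sufficiently saturated elementary extension.

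On this extension, take $\uptau$ to be the $\Sa E_K$-topology on $K^n$. It is affine and $\Aut(K)$-invariant, and it is non-discrete because $K$ is large. By~\cite[Section~4]{with-anand} there is a definable $\Sa E_K$-open $U \subseteq K^n$ and a definable equivalence relation $E$ on $U$ with open classes, whose classes correspond to the separable extensions of degree $n$. Suppose $E$ had infinitely many classes. By saturation we can then find an indiscernible sequence $(a_i)_{i<\omega}$ of pairwise $E$-inequivalent elements of $U$. The classes $[a_i]_E$ are pairwise disjoint, so the formula $x\,E\,y$ with parameter $a_0$ divides, and in fact $\thh$-divides. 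For $\thh$-dividing one uses the standard fact that a definable set which is one class of a definable equivalence relation with infinitely many classes, over a parameter that is not algebraic, $\thh$-divides. The set $[a_0]_E$ is open, hence has interior, and this contradicts Theorem~\ref{thm:fields}\eqref{fields_rosy}. So $E$ has finitely many classes, and $K$ has only finitely many separable extensions of degree $n$.

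\emph{Virtually large case.} Let $L/K$ be a finite extension with $L$ large. Then $L$ is interpretable in $K$, so $L$ is rosy and hence bounded by the large case. Every separable extension $M$ of $K$ of degree $n$ gives a compositum $ML$ that is separable over $L$ of degree at most $n$, so there are finitely many possibilities for $ML$. Each $ML$ is a finite extension of $K$, and a finite extension has only finitely many subfields that are separable over $K$ of a given degree, because the separable subfields lie inside the separable closure of $K$ in $ML$, which is a finite separable extension. Hence $K$ has only finitely many separable extensions of degree $n$.

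\emph{Main obstacle.} The delicate step is checking that a single $E$-class $\thh$-divides, that is, producing the required $\thh$-dividing rather than mere dividing. The first approach is to view $a_0$ as realizing a non-algebraic type over the empty set, or over a small base, in the imaginary sort of $E$-classes. The class $[a_0]_E$ is then defined by a formula $\delta(x;e)$ with $e = a_0/E$ non-algebraic, and $\{\delta(x;e') : e' \models \tp(e)\}$ is $2$-inconsistent. This is exactly strong dividing witnessed over an algebraically closed base, which gives $\thh$-dividing. Should this formulation differ from the paper's, Theorem~\ref{thm:elastic}'s argument presumably builds the $\thh$-dividing tree directly from disjoint open definable families in the same way.
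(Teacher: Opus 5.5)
Your proposal is correct and follows the paper's proof essentially step for step: you reduce to the large case via the compositum argument (the paper's Fact~\ref{fact:bndd}), pass to a saturated elementary extension, and rule out infinitely many open $E$-classes using \cite{with-anand} together with Theorem~\ref{thm:fields}\eqref{fields_rosy}, which the paper packages as Corollary~\ref{cor:eq}. Your resolution of the ``main obstacle'' is the paper's own: the \textup{\thh}-dividing must be witnessed by the family of $E$-classes indexed by the imaginary sort $U/E$, since with the real parameter $a_0$ the family $([a]_E)_{a \models \tp(a_0/A)}$ need not be $k$-inconsistent (distinct realizations can share a class). One small correction: for the reduction to the saturated case, the absolute Galois group is not determined by the theory in general, so the right justification is simply that ``there are at least $m$ pairwise non-isomorphic separable extensions of degree $n$'' is first-order.
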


Our original proof of Corollary~\ref{cor:rosy_bounded} (and a more restrictive version of Theorem~\ref{thm:elastic}) followed the approach by Fornasiero and the first two authors in~\cite{FKM26}.
The argument we give here is significantly simpler. 
Our theorems are instances of the philosophy that results about large fields should be special cases of results about fields equipped with topologies, see~\cite{topological_proofs} for other instances of this.

\medskip
Bounded pseudo real closed fields are rosy; in particular, bounded $\pac$ fields such as pseudofinite fields are rosy~\cite{Cha-Pi,Mo17B}.
It is conjectured that any rosy field is bounded and pseudo real closed ($\prc$)~\cite{Kr15}.
We show that field orders on large rosy fields satisfy several properties of field orders on bounded $\prc$ fields~\cite{Samaria-2017,prestel-prc}.

\begin{thmA}\label{thm:field-orders}
Suppose that $K$ is large and rosy.
Then we have the following.
\begin{enumerate}[leftmargin=*]
\item $K$ admits only finitely many field orders, each of which is definable.
\item If $<$ is a field order on $K$ then $K$ is dense in the real closure of $(K, <)$.
\item If $<_1, <_2$ are distinct field orders on $K$ then any nonempty open $<_1$-interval intersects any nonempty open $<_2$-interval.
\end{enumerate}
\end{thmA}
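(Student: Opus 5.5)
We will reduce everything to Corollary~\ref{cor:rosy_bounded} and Theorem~\ref{thm:fields}\eqref{fields_rosy}, applied to a sufficiently saturated elementary extension $\Sa K^* \succ K$. The $\Sa E$-topology on $(K^*)^n$ is affine, non-discrete (largeness is elementary), $\Aut(\Sa K^*)$-invariant, and has a definable basis. The plan is to translate each claim about orders into the statement that a certain definable set with $\Sa E$-interior would $\thh$-divide.

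\textbf{Step 1 (finitely many orders, definable).} Since $K$ is bounded by Corollary~\ref{cor:rosy_bounded}, $K^\times/(K^\times)^2$ is finite. A field order is determined by its positive cone, and the positive cone is a union of square classes. Hence there are only finitely many orders, and each positive cone $P$ is a finite union of cosets $a(K^\times)^2$. Each such cone is therefore definable (parameters from coset representatives).

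\textbf{Step 2 (interaction of orders with $\Sa E_K$).} Using the facts from~\cite{firstpaper}, I will show that for any field order $<$ on a large field, every open $<$-interval is $\Sa E_K$-open. To see this, note that $x>0$ iff $x$ lies in the positive cone. By Step 1 the cone is a finite union of square classes, and $a(K^\times)^2 = \{x : \exists y\, ay^2 = x\}\setminus\{0\}$ is the image of an étale map. So $P$ is $\Sa E$-open and intervals $(a,b)=(a+P)\cap(b-P)$ are $\Sa E$-open. Both notions are definable uniformly, so this transfers to $K^*$.

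\textbf{Step 3 (density in the real closure, and independence of orders).} For (2), suppose $K$ is not dense in $R$, the real closure of $(K,<)$. Then there is a nonempty interval $(\alpha,\beta)$ of $R$ with $\alpha<\beta$ containing no element of $K$. One must convert this gap into a $\thh$-dividing open set. I expect this to be the main obstacle. First try: find (using a nontrivial convex-like gap) an interval $I=(a,b)\subseteq K$ with $a,b\in K$ and an $\emptyset$-definable-over-small-parameters family of pairwise disjoint translates or scalings of $I$ in $K^*$ indexed by an indiscernible sequence. This shows $I$, which has interior by Step 2, $\thh$-divides, contradicting Theorem~\ref{thm:fields}\eqref{fields_rosy}. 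Concretely, if $K$ is not dense in $R$ then $K$ is not dense in its own Dedekind completion in a definable way. For instance, pick an irreducible $f$ with a root $\gamma\in R\setminus K$ isolated in a $K$-gap; then $\{x: f(x)>0\}$ and its complement give definable $\Sa E$-open sets. Exploiting the gap around $\gamma$, in $K^*$ infinitesimal neighborhoods relative to a small model become definable and can be made pairwise disjoint along an indiscernible sequence (a strong-dividing pattern). Since strong dividing implies $\thh$-dividing, this gives the contradiction.

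For (3), let $I_1$ be a $<_1$-interval and $I_2$ a $<_2$-interval. Both are $\Sa E$-open by Step 2. If they were disjoint, I would use approximation for distinct orders: the two orders are independent (Artin--Whaples-type), so by applying affine maps in $K^*$ one can place copies of $I_1$ inside $I_2$-complements uniformly. This yields an indiscernible family of affine images of $I_1$ which are pairwise disjoint, since disjointness is enforced by $I_2$-type separation. So $I_1$ $\thh$-divides, a contradiction. An alternative route is to show that $I_1\cap I_2=\emptyset$ makes the $\Sa E$-topology fail to be ``irreducible enough''. Theorem~\ref{thm:fields} forbids this, as in the proof that any two nonempty definable open sets meet after a generic affine transformation.
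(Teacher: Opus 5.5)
Your Steps 1 and 2 are correct. Step 1 is essentially how the paper gets (1). Step 2 is also correct, and although the paper does not need it, see below. The gap is in Step 3, for both (2) and (3).

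\textbf{What fails in Step 3.} The pattern you aim for is translates or scalings of an interval that are pairwise disjoint along an indiscernible sequence. That witnesses \emph{dividing}, not \thh-dividing. In the paper's definition, $Y_{z_0}$ \thh-divides only if, for some $A$, the family indexed by \emph{all} realizations of the non-algebraic type $\tp(z_0/A)$ is $k$-inconsistent. Disjointness along one indiscernible sequence says nothing about arbitrary pairs of realizations.

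Your sketch never uses the gap in an essential way, and it proves too much. In a saturated real closed field (large, and o-minimal hence rosy), an interval $(b,b+1)$ with $b$ infinite divides over $\emptyset$; this is why o-minimal theories are not simple. Yet by Theorem~\ref{thm:fields}\eqref{fields_rosy} itself, no set with interior \thh-divides there. Moreover, the ``infinitesimal neighbourhoods relative to a small model'' you invoke are only type-definable, not definable.

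For (3), appealing to Artin--Whaples-type independence of the two orders is circular. For two orders, the conclusion of (3) holds exactly when they induce distinct topologies \cite[Thm.~4.1]{Prestel1978}. Distinct orders can induce the same topology: the two orders on $\mathbb{R}(t)$ making $t$ a positive, resp.\ negative, infinitesimal both induce the $t$-adic topology, and (3) fails for them. Excluding this situation is the whole content of (3). Your alternative route cannot work either. Nonempty $\Sa E_K$-open sets can certainly be disjoint (e.g.\ two disjoint $<_1$-intervals), and elasticity only makes open sets meet \emph{after} applying a suitable affine map.

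\textbf{The missing idea.} Both (2) and (3) reduce to producing a non-trivial \emph{definable convex valuation}. A rosy field cannot have one, since by (1) the orders are definable in $K$.
\begin{itemize}
\item For (2), the paper uses \cite[Prop.~6.5]{JSW-field}: if $K$ is not dense in the real closure of $(K,<)$, then $(K,<)$ defines a non-trivial convex valuation.
\item For (3), by Prestel's theorem it suffices to rule out two distinct orders inducing the same topology. This is Lemma~\ref{lem:rosy orders}. The set $\Gamma$ of elements on which the two signs agree is a definable open proper subgroup of $K^\times$. The additive stabilizer of $\Gamma\cup(1+\Gamma)$ is then a non-trivial definable open additive subgroup. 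From it one obtains a non-trivial definable convex valuation ring.
\end{itemize}

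Your Step 2 gives a clean way to finish inside the paper's framework. Let $\mathcal{O}$ be a non-trivial convex valuation ring with maximal ideal $\mathfrak{m}$. The cosets of $\mathfrak{m}$ in $\mathcal{O}$ are order-open, hence $\Sa E_K$-open by Step 2. There are infinitely many of them, because the residue field is ordered. After passing to a saturated extension, this contradicts Corollary~\ref{cor:eq}.

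This is the correct form of your ``disjoint infinitesimal neighbourhoods'' intuition. Distinct classes of a definable equivalence relation are disjoint for \emph{every} pair of realizations, and that is what \thh-dividing requires.
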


Here (1) is a consequence of boundedness while (2) and (3) each require another application of rosiness.
The third item follows from a more general result showing that if two distinct field orders induce the same topology then both orders are ``definably non-archimedean". Note that by (2), any non-trivial valuation $v$ on $K$ that is convex with respect to some field order on $K$ has divisible value group and real closed residue field. Using a result of Krupi\'nski~\cite{Kr15}, we can show that any valuation on $K$ has divisible value group, assuming $K$ is perfect; see Corollary~\ref{cor:k} below.

\medskip
Theorem~\ref{thm:elastic}\eqref{B_stable} is sharp in the sense that 1-elasticity (even transitivity) is not enough.
To see this, consider $\Zz$ as an additive group, equipped with the topology with basic open sets of the form $n\Zz + \beta$ for $n \ge 1$ and $\beta \in \Zz$.
This topology is non-discrete and there are obviously pairs of disjoint nonempty definable open sets.
The natural action of $\Zz$ on itself is a definable action by homeomorphisms and $\Zz$ is superstable.
Theorem~\ref{thm:fields} appears sharp in that it does not go through for other model-theoretic tree properties.
There are $\mathrm{NTP}_2$ fields such as $\Qq_p$ in which definable open sets such as $\Zz_p$ divide.
There are also unbounded $\mathrm{NSOP}_1$ fields that are $\pac$ and hence large~\cite[Cor.~6.2]{cr-tree}.
The proof of Corollary~\ref{cor:rosy_bounded} shows that such fields define $\Sa E_K$-open subsets of $K^n$ which divide.

\subsection*{Conventions}\label{section:conventions}
Throughout $n, m, k$ are natural numbers (including $0$).
We let $\Chara(K)$ be the characteristic of the field $K$.
All structures are first-order and ``definable'' without modification means ``first-order definable, possibly with parameters''.

\subsection*{Acknowledgments}
The first author is supported by a CR fellowship through the Fonds de la Recherche Scientifique -- FNRS.
The second author's work is supported by EPSRC DTP 2224 [EP/W524372/1] (at the University of Leeds).
The third author is supported by the Austrian Science Fund (FWF) 10.55776/PAT1673125.
The authors thank the organizers of the first annual Chania model theory conference, at which this work was begun.
We also thank Philip Dittmann for explaining how to prove Fact~\ref{fact:r-bndd} and Aristotelis Panagiotopoulos for pointing us to the notion of elasticity.

\section{Proof of Theorem~\ref{thm:elastic}}\label{section:A}

In proving Theorem~\ref{thm:elastic}\eqref{B_stable}, we use the following standard fact~\cite[Thm.~8.2.3]{TZ12}.

\begin{fact}\label{fact:stable}
Let $\Sa M$ be a  structure and $\delta(x; y)$ be a formula.
Suppose that some collection of boolean combinations of members of $\{ \delta(M^{|x|}; b) : b \in M^{|y|} \}$ forms a complete binary tree under inclusion.
Then $\delta(x; y)$ is not stable.
\end{fact}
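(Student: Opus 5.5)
Throughout, I read ``complete binary tree under inclusion'' as a family $(X_\sigma)_{\sigma \in 2^{<\omega}}$ of nonempty sets. Each $X_\sigma$ is a boolean combination of sets $\delta(M^{|x|};b)$. For each $\sigma$ we have $X_{\sigma 0}, X_{\sigma 1} \subseteq X_\sigma$, and these two children are disjoint.

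The plan is to convert the tree into $2^{\aleph_0}$ distinct $\delta$-types over a countable parameter set. This contradicts the standard counting consequence of stability of a formula (definability of $\delta$-types): if $\delta(x;y)$ is stable, then $|S_\delta(B)| \le |B| + \aleph_0$ for every $B$.

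First, I collect parameters. Each $X_\sigma$ is a boolean combination of finitely many instances $\delta(x;b)$. So the set $B$ of all parameters $b$ occurring in some $X_\sigma$ is countable. Next, fix a branch $\eta \in 2^{\omega}$. The sets $X_{\eta|n}$ form a decreasing chain of nonempty definable sets, so the collection $\{x \in X_{\eta|n} : n \in \omega\}$ is finitely satisfiable. Pass to an $\aleph_1$-saturated elementary extension $\Sa N \succeq \Sa M$; stability of $\delta$ is a property of the theory, so nothing is lost. In $\Sa N$, realize each such collection by some $a_\eta$. Let $p_\eta$ be the $\delta$-type of $a_\eta$ over $B$.

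Now I check that distinct branches give distinct $\delta$-types. Take $\eta \neq \eta'$ and let $\sigma$ be their longest common initial segment. Say $\eta$ extends $\sigma 0$ and $\eta'$ extends $\sigma 1$. Then $a_\eta \in X_{\sigma 0}$ and $a_{\eta'} \in X_{\sigma 1}$, and these sets are disjoint (as interpreted in $\Sa N$, by elementarity). Membership in $X_{\sigma 0}$ is a boolean combination of conditions $\delta(x;b)$ with $b \in B$, so it is determined by the $\delta$-type over $B$. Hence $p_\eta \neq p_{\eta'}$, and $|S_\delta(B)| \ge 2^{\aleph_0} > |B| + \aleph_0$. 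So $\delta$ is unstable.

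The main subtlety is the counting bound $|S_\delta(B)| \le |B| + \aleph_0$. I would quote it from the local stability theory in \cite{TZ12}. Alternatively, it follows from the fact that a stable $\delta$ has finite local $2$-rank, which bounds the number of $\delta$-types over countable sets. If one only had finite-height trees, a compactness step would be needed to obtain height $\omega$. That step is delicate because the boolean combinations might have unbounded complexity. So I would work with the $\omega$-indexed tree as stated, which avoids this issue.
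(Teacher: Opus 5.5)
Your proof is correct. It is not really the paper's route, because the paper gives no argument: it imports the fact as a black box from \cite[Thm.~8.2.3]{TZ12}.

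You instead reduce the statement to another standard consequence of stability, the counting bound $|S_\delta(B)|\le |B|+\aleph_0$. That bound does hold. If $\delta$ is stable then $R(x=x,\delta,2)<\omega$. Each $p\in S_\delta(B)$ is then determined by a finite conjunction $\theta\in p$ of minimal $2$-rank $r$, via
\[
\delta(x,b)^{\epsilon}\in p \iff R(\theta\wedge\delta(x,b)^{\epsilon},\delta,2)\ge r .
\]
This holds because both halves having rank $\ge r$ would force $R(\theta,\delta,2)\ge r+1$. The rest of your argument is sound: $B$ is countable, the branches are realized in an $\aleph_1$-saturated extension, and two branches are separated at their splitting node by the disjointness of $X_{\sigma 0}$ and $X_{\sigma 1}$.

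The citation is shorter. Your version shows exactly where the hypotheses enter: the unbounded complexity of the boolean combinations is harmless because only countably many parameters occur, and the height $\omega$ is what produces $2^{\aleph_0}$ types.

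Your caution about finite-height trees is well founded, and the situation is worse than ``delicate''. For the stable formula $x=y$, every finite set is a boolean combination of instances, so complete binary trees of every finite height exist. The infinite height in the hypothesis is therefore essential, and your counting argument uses it in exactly the right way.
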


\begin{proof}[Proof of Theorem~\ref{thm:elastic}\eqref{B_stable}]
Suppose that $X$ is a definable subset of $M$ such that $X$ and $M \setminus X$ both have interior. Let $\Gamma(X) \coloneqq (\gamma(X))_{\gamma \in \Gamma}$.
Let $\delta(x; \gamma)$ be the formula defining $\Gamma(X)$, i.e. $\exists y [y \in X] \land [\gamma(y) = x]$.
We show that $\delta(x ; \gamma)$ is unstable by applying Fact~\ref{fact:stable}.
It suffices to prove that if $Z$ is an intersection of sets in $\Gamma(X)$ and complements of sets in $\Gamma(X)$ such that $Z$ has interior, then there is $\gamma \in \Gamma$ such that $\gamma(X)\cap Z$ and $\gamma(M^n \setminus X) \cap Z = Z \setminus \gamma(X)$ both have interior.
This is immediate by $2$-elasticity.
\end{proof}

For the proof of Theorem~\ref{thm:elastic}\eqref{B_rosy}, we recall the definition of \textup{\thh}-rank  and \textup{\thh}-dividing from~\cite{On06, EO07}.
Suppose that $\Sa M$ is sufficiently saturated.
Let $\cY = (Y_z)_{z\in Z}$ be a definable family of subsets of $M^n$, indexed by a definable set in $\Sa M^{\operatorname{eq}}$.
A member $Y_{z_0}$ of this family is said to \textbf{\textup{\thh}-divide} if there is a parameter set $A$ extending the defining parameters for $\cY$ such that the type $\tp(z_0/A)$ is non-algebraic and the family $(Y_z)_{z \models \tp(z_0/A)}$ is $k$-inconsistent. We note that \textup{\thh}-dividing depends not just on the set $Y_{z_0}$, but also on the family $\cY$ from which it comes. 

\medskip
Now consider also a definable subset $X$ of $M^n$, a definable family $\cZ = (Z_w)_{w \in W}$ of subsets of $Z$, and $k \ge 1$.
We define the \textbf{\textup{\thh}-rank} $\th(X,\cY,\cZ,k)$ as follows:

\begin{enumerate}[label=(\roman*)]
\item $\th(X,\cY,\cZ,k)\geq 0$ if $X$ is nonempty, and
\item $\th(X,\cY,\cZ,k)\geq n+1$ if there is $Z_w \in \cZ$ so that for every $m \ge 1$ there are distinct elements $z_1, \ldots, z_m$ of $Z_w$ such that
\begin{enumerate}
\item the family $(Y_z)_{z \in Z_w}$ is $k$-inconsistent, and 
\item $\th(X \cap Y_{z_i},\cY,\cZ,k)\geq n$ for each $i = 1, \ldots, m$.
\end{enumerate}
\item $\th(X,\cY,\cZ,k)$ is the supremum in $\Nn \cup \{ \pm \infty \}$ of the set of $n$ such that $\th(X, \cY, \cZ, k) \ge n$.
\end{enumerate}

If $\th(X,\cY,\cZ,k)$ is finite for all $X,\cY,\cZ,k$, then $\Sa M$ is said to be \textbf{rosy}. 
The usual definition of rosiness is slightly different.
Instead of asking for arbitrarily long finite sequences $z_1, \ldots, z_m$, one requires an infinite sequence $(z_i)_{i < \omega}$ of distinct elements of $Z_w$ satisfying (b).
A routine saturation argument shows that our definition is equivalent to the usual definition for sufficiently saturated $\Sa M$.

\begin{proof}[Proof of Theorem~\ref{thm:elastic}(\ref{B_rosy}, \ref{B_simple})]
We first prove~\eqref{B_rosy}. 
Let $\cY = (Y_z)_{z \in Z}$ be a definable family of subsets of $M^n$ and suppose that some $Y_{z_0} \in \cY$ has interior and \thh-divides. By definition, there is a small parameter set $A$ extending the defining parameters for $Y$ such that $\tp(z_0/A)$ is nonalgebraic and $(Y_z)_{z \models \tp(z_0/A)}$ is $k$-inconsistent for some $k \ge 1$.
Saturation yields a (necessarily infinite) $A$-definable set $Z_0$ containing $z_0$ such that $(Y_z)_{z \in Z_0}$ is $k$-inconsistent. Replacing $Z$ with $Z_0$, we may assume that our original family $\cY$ is $k$-inconsistent.
Set 
\[
\Gamma(\cY) \coloneqq (\gamma(Y_z))_{(\gamma,z) \in \Gamma\times Z} \qquad \cZ \coloneqq (\{\gamma \}\times Z)_{\gamma \in \Gamma}.
\]
We show that $\th(M^n,\Gamma(\cY),\cZ,k)$ is infinite; it follows that $\Sa M$ is not rosy. 
Note that the family $(\gamma(Y_z))_{z \in Z}$ is $k$-inconsistent for any $\gamma \in \Gamma$.
By a straightforward inductive argument, it suffices to show that if $X$ is a finite intersection of sets in $\Gamma(\cY)$ and $X$ has interior, then for any $m \ge 1$ we can find $\gamma \in \Gamma$ and a sequence $z_1, \ldots, z_m$ of distinct elements of $Z$ such that each $\gamma(Y_{z_i})\cap X$ has interior.
Using invariance of the topology and saturation, we find distinct $z_1, \ldots, z_m \in Z$ such that each $Y_{z_i}$ has interior.
Then elasticity yields the desired $\gamma$. 

\medskip
Part~\eqref{B_simple} can be established with obvious changes to the proof above. Indeed, the proof shows that if some $Y_{z_0} \in \cY$ with interior divides, then the family $\Gamma(\cY)$ has the tree property. 
\end{proof}
It is an open question whether \thh-dividing agrees with dividing over simple theories, so part~\eqref{B_simple} of Theorem~\ref{thm:elastic} may not follow on general grounds from part~\eqref{B_rosy}.

\begin{corollary}\label{cor:eq}
Let $\Sa M$, $\uptau$, and $\Gamma$ be as in the second part of Theorem~\ref{thm:elastic}. 
If  $\Sa M$ is rosy and $E$ is a definable equivalence relation on a definable subset $U\subseteq M^n$  with open classes, then $U/E$ is finite.
\end{corollary}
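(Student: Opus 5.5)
We argue by contradiction and reduce to Theorem~\ref{thm:elastic}\eqref{B_rosy}. Suppose $U/E$ is infinite. We work in $\Sa M^{\operatorname{eq}}$: let $Z \coloneqq U/E$ be the imaginary sort, and for $z \in Z$ let $Y_z \subseteq M^n$ be the corresponding $E$-class. This gives a definable family $\cY = (Y_z)_{z\in Z}$ indexed by a definable set in $\Sa M^{\operatorname{eq}}$, defined over the parameters $A_0$ used to define $E$ and $U$. Every member of $\cY$ is a nonempty open subset of $M^n$, since each $E$-class is open. So each member has interior.

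We now show that some member of $\cY$ \thh-divides, which contradicts Theorem~\ref{thm:elastic}\eqref{B_rosy}. The family $\cY$ is $2$-inconsistent, since distinct $E$-classes are disjoint. Because $Z$ is infinite and definable over $A_0$, saturation gives some $z_0 \in Z$ with $\tp(z_0/A_0)$ non-algebraic. Take $A \coloneqq A_0$. Then $(Y_z)_{z \models \tp(z_0/A)}$ is a subfamily of $\cY$, so it is $2$-inconsistent as well. By definition, $Y_{z_0}$ \thh-divides with respect to the family $\cY$. But $Y_{z_0}$ has interior, and the hypotheses of the second part of Theorem~\ref{thm:elastic} (saturation, elasticity, $\Aut(\Sa M)$-invariance) are exactly those assumed in the corollary. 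This is a contradiction, so $U/E$ is finite.

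The only point requiring care is that the statement of Theorem~\ref{thm:elastic}\eqref{B_rosy} speaks of ``definable subsets with interior'' \thh-dividing, and \thh-dividing depends on the indexing family. The theorem's proof handles an arbitrary definable family indexed in $\Sa M^{\operatorname{eq}}$, so applying it to $\cY$ indexed by $U/E$ is legitimate. One could also avoid imaginaries by indexing with $U$ itself, taking $Y_u \coloneqq [u]_E$; but then distinct indices may give the same set, so the family is not $2$-inconsistent, and the quotient is the cleaner choice.

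If one prefers to avoid the saturation step for the non-algebraic type, the $\th$-rank computation in the proof of Theorem~\ref{thm:elastic} can be applied directly to the $2$-inconsistent infinite definable family $\cY$. That argument uses only that the index set is infinite and that the members have interior.
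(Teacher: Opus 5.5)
Your proposal is correct and matches the paper's argument: index the $E$-classes by $U/E$ in $\Sa M^{\operatorname{eq}}$, note that infinitely many pairwise disjoint nonempty open classes give a member with interior that \thh-divides with respect to this family, and contradict Theorem~\ref{thm:elastic}\eqref{B_rosy}. You also make explicit the details the paper leaves implicit, namely $2$-inconsistency and the choice, by saturation, of $z_0$ with non-algebraic type over the defining parameters.
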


\begin{proof}
If there were infinitely many $E$-classes, then some member of the definable family of $E$-classes (parametrized by $U/E$) would \thh-divide.
This contradicts Theorem~\ref{thm:elastic}\eqref{B_rosy}.
\end{proof}

\section{Consequences over fields}\label{section:B}

Throughout this section, $\Sa K$ is an expansion of an infinite field $K$, $n \ge 1$, and $\uptau$ is a topology on $K^n$.
Recall that $\uptau$ is {\bf affine} if any invertible affine map $K^n \to K^n$ is a homeomorphism.
\begin{lemma}\label{lem:affine}
Suppose that $\uptau$ is affine and non-discrete.
Let $U$ be a non-empty open subset of $K^n$.
Then $U$ is not contained in a finite union of proper affine subspaces of $K^n$.
\end{lemma}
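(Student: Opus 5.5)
The plan is to prove the stronger statement: for every $d$ with $0 \le d < n$, no nonempty open $U \subseteq K^n$ is contained in a finite union of affine subspaces of dimension at most $d$. I argue by induction on $d$. Taking $d=n-1$ gives the lemma. The only tool used is that maps of the form $\gamma(x) = p + A(x-p)$, with $A \in \mathrm{GL}_n(K)$, are invertible affine maps. They are therefore homeomorphisms, and they fix $p$.

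\emph{Base case $d=0$.} Here the claim is that nonempty open sets are infinite. Suppose $U$ is a nonempty finite open set and pick $u \in U$. For $\lambda \in K^\times$ let $\gamma_\lambda(x) = u + \lambda(x-u)$. For each of the finitely many pairs $v,w \in U\setminus\{u\}$, at most one $\lambda$ satisfies $u+\lambda(v-u) = w$. Since $K$ is infinite, we can choose $\lambda$ avoiding all of these, and then $U \cap \gamma_\lambda(U) = \{u\}$. So $\{u\}$ is open. Translations are homeomorphisms, so every singleton is open and $\uptau$ is discrete, a contradiction.

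\emph{Inductive step.} Suppose the claim holds for $d-1$, and suppose $U \ne \emptyset$ is open with $U \subseteq H_1 \cup \dots \cup H_m$, where each $H_i$ is an affine subspace of dimension at most $d$. Fix $p \in U$. I will find $A \in \mathrm{GL}_n(K)$ such that, with $\gamma(x) = p + A(x-p)$, no $\gamma(H_i)$ is contained in any $H_j$. Granting this, each $\gamma(H_i) \cap H_j$ is either empty or a proper affine subspace of $\gamma(H_i)$, so it has dimension at most $d-1$. Now $V \coloneqq U \cap \gamma(U)$ is open because $\gamma$ is a homeomorphism, and it contains $p = \gamma(p)$, so it is nonempty. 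Moreover
\[
V \subseteq \bigcup_{i,j} \bigl(\gamma(H_i) \cap H_j\bigr),
\]
which is a finite union of affine subspaces of dimension at most $d-1$. This contradicts the induction hypothesis.

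\emph{Main obstacle: choosing $A$.} Write $H_i = q_i + L_i$ with $L_i$ a linear subspace. The condition $\gamma(H_i) \subseteq H_j$ forces $A(L_i) \subseteq L_j$ and $p + A(q_i - p) \in H_j$. I handle the two cases separately.

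If $\dim L_i \ge 1$, pick a nonzero $v \in L_i$. Then $\gamma(H_i) \subseteq H_j$ requires $Av \in L_j$, and $L_j \ne K^n$. The set of $A$ with $Av \in L_j$ is the zero set in $\mathrm{GL}_n$ of a nonzero linear form in the entries of $A$.

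If $\dim L_i = 0$, so $H_i = \{q_i\}$, I only need $\gamma(q_i) \notin H_j$. When $q_i \ne p$ this again fails only on a proper affine-linear condition on the entries of $A$.

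The degenerate case $H_i = \{p\}$ cannot be avoided by any choice of $A$. In that case, though, $\{p\}$ is a point, and it suffices to remove it. I will therefore replace $p$ by a point of $U$ lying in none of the zero-dimensional $H_i$. Such a point exists because $U$ is infinite by the base case.

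Since $K$ is infinite, the affine space of $n\times n$ matrices over $K$ is not a finite union of proper Zariski-closed subsets, and this includes the zero set of $\det$. So some $A$ avoids all of the finitely many bad conditions simultaneously, which is exactly what the inductive step needs.
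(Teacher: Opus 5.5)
Your proof is correct and is essentially the paper's argument, which translates so that $0 \in U$, chooses $h_1, \ldots, h_m \in \mathrm{GL}_n(K)$ with $h_1(X) \cap \cdots \cap h_m(X) = \{0\}$ by ``elementary linear algebra'', and concludes that $\{0\} = h_1(U) \cap \cdots \cap h_m(U)$ is open; your induction on dimension (one generic map fixing a point at each step, with the scaling argument for finite open sets as the base case) makes that linear algebra explicit. One small simplification: the degenerate case $H_i = \{p\}$ needs no workaround, because in the inductive step $d \ge 1$, so $\{p\} \cap H_j$ already has dimension at most $d-1$; more generally, only components of dimension exactly $d$ require a generic choice of $A$.
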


When $K$ is large and $\uptau$ is the $\Sa E_K$-topology, Lemma~\ref{lem:affine} follows as any nonempty open subset of $V(K)$ is Zariski dense in $V$ when $V$ is a smooth irreducible $K$-variety~\cite[Prop.~5]{topological_proofs}.

\begin{proof}[Proof of Lemma~\ref{lem:affine}]
Suppose otherwise and let $X$ be a finite union of proper affine subspaces of $K^n$ containing $U$.
After possibly translating, suppose that $U$ contains the origin.
By elementary linear algebra there are $h_1, \ldots, h_m \in \mathrm{Gl}_n(K)$ with $h_1(X) \cap \cdots \cap h_m(X) = \{ 0 \}$.
Then we have $\{ 0 \} = h_1(U) \cap \cdots \cap h_m(U)$, so $\{ 0 \}$ is open, contradicting non-discreteness.
\end{proof}

\begin{lemma}\label{lem:affine-stuffing}
Let $\uptau$ be affine and non-discrete.
Then the action of $\aff_n(K)$ on $K^n$ is elastic.
\end{lemma}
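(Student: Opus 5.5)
We must show: for nonempty open $U, V_1,\dots,V_k \subseteq K^n$ there is $h \in \aff_n(K)$ with $h(V_i)\cap U \neq \emptyset$ for every $i$. The plan is to pick one point $v_i \in V_i$ for each $i$ and one point in $U$ for each $i$, and then find an affine map sending each $v_i$ into $U$. Affine maps are only determined by the images of affinely independent points, so $k$ points cannot be sent to arbitrary targets when $k > n+1$. Instead we will fix a common affine configuration and use openness.

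First we reduce to one generic point per set. By Lemma~\ref{lem:affine}, no nonempty open set lies in a finite union of proper affine subspaces. So we can choose $v_1 \in V_1, \dots, v_k \in V_k$ inductively such that any $\min(k,n+1)$ of them are affinely independent. At stage $i$ we must avoid the finitely many proper affine subspaces spanned by at most $n$ of the earlier points, and $V_i$ is not contained in their union. We need the points to be distinct, which this choice already guarantees.

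Next we use openness of $U$ via $\aff_n(K)$-homogeneity. A cleaner route is to shrink $V_i$ rather than move points. After translating, assume $0 \in U$. For $\lambda \in K^\times$ and $c \in K^n$, consider $h(x) = \lambda x + c$. We want $\lambda v_i + c \in U$ for all $i$. Setting $c = -\lambda v_1$ reduces this to $\lambda(v_i - v_1) \in U$ for each $i$. This is the same as $\lambda \in \bigcap_i \{\lambda : \lambda w_i \in U\}$ with $w_i = v_i - v_1 \neq 0$. The difficulty is that we know nothing about scalar dilations converging to $0$; the topology is not assumed to be a field topology. So instead I would reverse roles and use the freedom in the $v_i$ together with Lemma~\ref{lem:affine} applied to an intersection of translates of $U$.

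The key step, which I expect to be the main obstacle, is an induction on $k$. Given $h$ with $h(V_i)\cap U \neq \emptyset$ for $i<k$, set $U' = \bigcap_{i<k} h^{-1}(U) \cap V_i$ — but these sets need not intersect. The correct invariant is to keep a point configuration. I would try the following. Choose $u \in U$ and $v_i \in V_i$. For each $i$, the set $W_i = \{g \in \aff_n(K) : g(v_i) \in U\}$ is "open" in the sense that $g(v_i)$ ranges over an open set. We need $\bigcap_i W_i \neq \emptyset$. Fix the linear part $A\in \mathrm{Gl}_n(K)$. Then we need a translation $c$ in $\bigcap_i (U - Av_i)$, a finite intersection of translates of $U$. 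So the problem reduces to this: find $A$ such that $\bigcap_i (U - Av_i)$ is nonempty.

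To handle it, replace $U$ by an open set $U_0 \ni 0$ with $U_0 - U_0 \subseteq$ something. That is unavailable, so instead use non-discreteness and induction on $k$ directly. Suppose $\bigcap_{i<k}(U - A v_i) = O$ is nonempty and open. We need $A'$ close in the sense that $O \cap (U - A' v_k) \neq \emptyset$ while keeping the earlier constraints. Take $A' = A + B$ where $B$ kills $v_1,\dots,v_{k-1}$ after replacing each $v_i$ by $v_i - v_1$ (translate everything so $v_1=0$). This is possible if $v_k \notin \operatorname{span}(v_2,\dots,v_{k-1})$. By Lemma~\ref{lem:affine} we can choose $v_k \in V_k$ outside that proper subspace, as long as $k-2 < n$. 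Then $A'v_k$ can be made any vector with $A'$ invertible for generic choices, again by Lemma~\ref{lem:affine}, and we pick $A' v_k \in U - O$. For $k$ beyond $n+1$ this span argument fails, so the remaining obstacle is to handle large $k$. For this I would first apply the inductive step in an auxiliary way: use Lemma~\ref{lem:affine} to choose the $v_i$ so that each lies off spans of the others, and exploit that $O$ is open and not contained in finitely many affine subspaces to choose the image point suitably.
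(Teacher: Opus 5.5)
There is a genuine gap. Your argument only covers $k \le n+1$, and that is the easy case: one can simply pick affinely independent $v_i \in V_i$ and affinely independent targets in $U$, then take the affine map sending one tuple to the other. All of the content of elasticity lies in the case $k > n+1$, which your proposal does not handle.

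Your inductive step needs to change $A$ at $v_k$ while keeping it fixed on $v_2,\dots,v_{k-1}$ (with $v_1=0$). Once $k-2 \ge n$, these vectors span $K^n$; you even chose them in general position. Then $B=0$ and no freedom remains, however generic the $v_i$ are. Any other change to $A$ moves some $Av_i$ with $i<k$, which destroys the invariant $O$. Since $\uptau$ need not be a field topology, nothing says a ``small'' change of $A$ keeps $Av_i$ inside $U - c$. Your closing sentence does not supply a replacement argument.

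The missing idea is already present in your abandoned scalar-dilation attempt: replace the scalar $\lambda$ by a diagonal matrix and use commutativity. Let $D=(K^\times)^n$, viewed both as a subset of $K^n$ and as the diagonal subgroup of $\mathrm{Gl}_n(K)$. Note that $K^n\setminus D$ is a finite union of coordinate hyperplanes.
\begin{enumerate}
\item Translate so that $0\in U$.
\item By Lemma~\ref{lem:affine}, pick $g_i\in V_i\cap D$ for each $i$.
\item Each $g_i$ acts as a linear homeomorphism fixing $0$, so $U^*=\bigcap_i g_i^{-1}(U)$ is an open neighbourhood of $0$ in $K^n$. This is exactly what you said you could not get for scalars: the set of good dilation parameters is automatically open in $K^n$.
\item Lemma~\ref{lem:affine} again gives some $h\in U^*\cap D$.
\item Since diagonal matrices commute, $h(g_i)=g_i(h)\in U$ for every $i$, so each $h(V_i)$ meets $U$.
\end{enumerate}
This needs no induction on $k$ and no general position of the chosen points.
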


\begin{proof}
Let $U, V_1, \ldots, V_k$ be nonempty open subsets of $K^n$.
We produce $h \in \aff_n(K)$ such that each $h(V_i)$ intersects $U$.
After possibly translating suppose $U$ contains the origin.
Let $D = (K^\times)^n$, note that $K^n \setminus D$ is a finite union of proper affine subspaces.
Identify $D$ with the diagonal subgroup of $\mathrm{Gl}_n(K)$.
Applying Lemma~\ref{lem:affine} fix $g_i \in V_i \cap D$ for each $i$.
Let $U^* = \bigcap_{i = 1}^{k} g^{-1}_i(U)$, so $U^*$ is a neighborhood of the origin.
By Lemma~\ref{lem:affine} there is $h \in U^* \cap D$.
Then $h$ is in each $g^{-1}_i(U)$. For each $i$, the maps $g_i(\cdot)$ and $h(\cdot)$ both correspond to diagonal matrices. Therefore $h(g_i) = g_i(h) \in U$, hence each $h(V_i)$ intersects $U$.
\end{proof}

We now recall the \'etale-open topology from~\cite{firstpaper}.
Let $V, W$ range over $K$-varieties, i.e. schemes of finite type over $K$.
(For our purposes we could also restrict to quasi-projective or even affine varieties.)
Let $V(K)$ be the set of $K$-points of $V$.
Let $\Aa^n$ be $n$-dimensional affine space over $K$, so $\Aa^n = \Spec K[x_1, \ldots, x_n]$ and $\Aa^n(K) = K^n$.
The {\bf \'etale-open topology}, or $\Sa E_K$-topology, on $W(K)$ is the topology with open basis the collection of sets of the form $f(V(K))$ for $f$ an \'etale morphism $V \to W$.
In particular, a subset of $K^n$ is open when it is a union of sets of the form $f(V(K))$ for $f \colon V \to \Aa^n$ \'etale.
We recall some basic facts.
\begin{enumerate}[leftmargin=*]
\item $K$ is large if and only if the $\Sa E_K$-topology on $K$ is not discrete if and only if the $\Sa E_K$-topology on $V(K)$ is not discrete when $V(K)$ is infinite~\cite[Thm.~7.1]{firstpaper}.
\item If $V \to W$ is a morphism of $K$-varieties then the induced map $V(K) \to W(K)$ is continuous with respect to the $\Sa E_K$-topology~\cite[Lemma~5.2]{firstpaper}.
In particular, the $\Sa E_K$-topology on $K^n$ is affine.
\item The $\Sa E_K$-topology on $K^n$ is $\Aut(K)$-invariant.
This follows immediately from the definition, see also~\cite[Prop.~2.4]{secondpaper}.
\end{enumerate}

Hence if $K$ is large and sufficiently saturated then we can apply Theorem~\ref{thm:fields} to the $\Sa E_K$-topology on $K^n$.

\begin{proof}[Proof of Corollary~\ref{cor:unstable}]
Let $K$ be a non-separably closed large field, equipped with the $\Sa E_K$-topology.
We describe an explicit unstable formula.
It follows by the proof of Theorem~\ref{thm:elastic} that if $X$ is a definable subset of $K$ such that $X$ and $K \setminus X$ both have interior then the formula $\delta(x ; s, t)$ given by $\exists y [y \in X] \land [y = sx + t]$ is unstable.
Let $f \in K[y]$ be separable, irreducible, and of degree at least $2$.
Then the morphism $\Aa^1 \to \Aa^1$ given by $f$ is \'etale on the subvariety of $\Aa^1$ given by $f' \ne 0$.
Hence $f(K)$ has interior.
Furthermore, $0$ is in the interior of $K \setminus f(K)$, see the proof of~\cite[Prop.~4.6]{large->henselian}.
Hence the formula $\varphi(x ; s, t)$ given by
$$ \exists y [f(y) = sx + t]$$
is unstable.
\end{proof}

\begin{proof}[Proof of Corollary~\ref{cor:rosy_bounded}]
Let $K$ be rosy and virtually large. We will show that $K$ is bounded. Let $F$ be a large finite extension of $K$, so $F$ is also rosy, as it is interpretable in $K$.
By Fact~\ref{fact:bndd} below it suffices to show that $F$ is bounded.
Hence we may suppose that $K$ is large.
Let $K^*$ be a sufficiently saturated elementary extension of $K$.
Then $K^*$ is rosy and $K^*$ is bounded if and only if $K$ is bounded.
Hence we may also suppose that $K$ is sufficiently saturated.
Fix $n \ge 2$.
We show that $K$ has only finitely many separable extensions of degree $n$.
Equip $K^n$ with the \'etale-open topology.
Given $a = (a_0, \ldots, a_{n - 1}) \in K^n$ let $p_a(x) = x^n + a_{n - 1} x^{n - 1} + \cdots + a_1 x + a_0 \in K[x]$.
Let $U$ be the set of $a \in K^n$ such that $p_a$ is separable and irreducible and let $E$ be the equivalence relation on $U$ given by declaring $E(a, b)$ when $K[x]/(p_a)$ is isomorphic as a $K$-algebra to $K[x]/(p_b)$.
It is enough to show that there are only finitely many $E$-classes. 
Note that $U$ and $E$ are definable.
By Corollary~\ref{cor:eq} it is enough to show that every $E$-class is an open subset of $K^n$.
This is~\cite[Thm.~4.3]{with-anand}.
\end{proof}

\begin{fact}\label{fact:bndd}
If some finite extension of $K$ is bounded then $K$ is bounded.
\end{fact}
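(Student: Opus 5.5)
The plan is to use Galois theory: if $L/K$ is a finite extension and $L$ is bounded, then $K$ is bounded. Two reductions come first. We may replace $L$ by a finite extension that is Galois over $K$ with separable part understood. Here we use that a finite extension of a bounded field is bounded. Indeed, a separable extension of $L'$ of degree $d$, with $[L':L]=e$, lies inside a separable extension of $L$ of degree at most $de$ (its separable closure over $L$ has bounded degree). The standard fact that only finitely many separable extensions of degree at most $N$ exist then bounds the number of such extensions. Boundedness is equivalent to the absolute Galois group $G$ (or the separable-closure Galois group) having only finitely many open subgroups of each index. So we pass to profinite groups. Let $K^s$ be a separable closure containing the separable part of $L$, let $G_K=\Gal(K^s/K)$, and let $H=G_{L}=\Gal(K^s/K^s\cap L)$. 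The purely inseparable part of $L/K$ does not affect absolute Galois groups, because $G_L\cong G_{L\cap K^s}$. Hence $H$ is an open subgroup of $G_K$ of some index $e$.

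The key group-theoretic step is the following. Suppose $H\le G$ is open of index $e$ and $H$ has finitely many open subgroups of each index. Then $G$ has finitely many open subgroups of each index. Let $U\le G$ be open of index $d$. Then $U\cap H$ is open in $H$ with $[H:U\cap H]\le [G:U]=d$, so $U\cap H$ ranges over finitely many subgroups $W$ of $H$. Fix such a $W$. It has index at most $de$ in $G$, and since it is open, its normal core $N$ in $G$ is open of finite index. Every $U$ with $U\cap H=W$ contains $W\supseteq N$. So $U$ corresponds to a subgroup of the finite group $G/N$, and there are only finitely many of these. Summing over the finitely many $W$ gives finitely many $U$ of index $d$.

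Translating back: separable extensions of $K$ of degree $d$ inside $K^s$ correspond to open subgroups of $G_K$ of index $d$. Up to $K$-isomorphism this is also finite, which is the definition of bounded. Since $L\cap K^s$ is bounded, $G_H$ has finitely many open subgroups of each index, and so $K$ is bounded. The main point to check carefully is the reduction from $L$ to $L\cap K^s$, namely that bounding counts only separable extensions and that $G_L\cong G_{L\cap K^s}$ via restriction. This holds because a purely inseparable extension induces an equivalence of the categories of separable extensions. Beyond that, the group argument above is routine.
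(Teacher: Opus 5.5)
Your proof is correct and is essentially the paper's argument transported through the Galois correspondence: your map $U\mapsto U\cap H$ is the paper's $K_s\mapsto FK_s$, and the finitely many overgroups of $W$ are the finitely many separable subextensions of the $F_i$. The opening reduction to an $L$ that is Galois over $K$ is never used, and its justification is wrong as written (when $L'/L$ is inseparable, a separable extension $M$ of $L'$ lies in no separable extension of $L$; rather $M=M_sL'$ with $M_s$ the separable closure of $L$ in $M$), so you should simply delete it.
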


\begin{proof}
Fix some $n$, let $\{K_s : s \in S\}$ be the set of degree $n$ separable extensions of $K$, and let $F$ be a finite bounded extension of $K$.
By boundedness, $\{FK_s : s \in S\}$ is finite, say equal to $\{F_1, \ldots, F_m\}$.
Every $K_s$ is contained in some $F_i$.
Any finite field extension contains only finitely many separable subextensions.
\end{proof}

Krupi\'nski proved Corollary~\ref{cor:k} for superrosy fields~\cite{Kr15}.
With Corollary~\ref{cor:rosy_bounded} in place it goes through for large rosy fields.

\begin{corollary}\label{cor:k}
Suppose that $K$ is large, rosy, and perfect.
Then any non-trivial valuation on $K$ has divisible value group.
If $K$ additionally has positive characteristic then any non-trivial valuation on $K$ has algebraically closed residue field.
\end{corollary}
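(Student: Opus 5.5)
Following Krupi\'nski's argument for superrosy fields, I would reduce both statements to facts about the multiplicative groups $(K^\times)^\ell$ and the Artin--Schreier images $\wp(K)$, using boundedness from Corollary~\ref{cor:rosy_bounded}. By Corollary~\ref{cor:rosy_bounded}, $K$ is bounded. Since $K$ is perfect, every finite extension is separable, so $K$ has only finitely many extensions of each degree. For a prime $\ell \neq \Chara(K)$ this gives $[K^\times : (K^\times)^\ell] < \infty$. When $\Chara(K)=p>0$, it gives $[K : \wp(K)] < \infty$, where $\wp(x)=x^p-x$, because the Artin--Schreier extensions correspond to $\Zz/p$-subspaces of $K/\wp(K)$. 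Moreover $K = K^p$ by perfectness.

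For divisibility of the value group, let $v$ be a non-trivial valuation with value group $\Gamma_v$ and fix a prime $\ell$. If $\ell = p = \Chara(K)$, then $\Gamma_v = v(K^\times) = v((K^\times)^p) = p\Gamma_v$ by perfectness. If $\ell \neq \Chara(K)$, the map $K^\times/(K^\times)^\ell \to \Gamma_v/\ell\Gamma_v$ is surjective, so $\Gamma_v/\ell\Gamma_v$ is finite. This does not yet give divisibility, so I would prove $\Gamma_v = \ell\Gamma_v$ via the chain condition in rosy groups. I would use the fact (Ealy--Krupi\'nski--Pillay) that in a rosy group any uniformly definable family of subgroups satisfies a chain condition in its finite-index version. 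Consider the groups $H_n := (K^\times)^{\ell^n}$. Each $H_{n+1}$ has finite index in $H_n$ by boundedness, but the indices need not stabilize, so I would instead combine $\ell$-th powers with the valuation ring. Krupi\'nski's argument shows that if $\Gamma_v$ is not $\ell$-divisible, the definable subgroups $(1+\mathfrak m_v)\cdot (K^\times)^\ell$, or rather the groups $H_\gamma = \{x : v(x-1) > \gamma\}\cdot (K^\times)^{\ell}$ intersected appropriately, yield an infinite descending chain of subgroups each of infinite index in the previous one. In a rosy field this contradicts the fact that $K^\times$ has no such chains of uniformly definable subgroups. The subtlety is that $v$ need not be definable. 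I would handle this by working with definable subgroups built from $\ell$-th powers only, and using $v$ only externally to verify that the indices are infinite.

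For the residue field when $\Chara(K)=p>0$, the residue field $k_v$ is perfect because $K$ is, and it inherits finiteness of the Artin--Schreier quotient $k_v/\wp(k_v)$ and finiteness of $k_v^\times/(k_v^\times)^\ell$. To get algebraic closedness I would again run Krupi\'nski's chain argument additively. I would use the uniformly definable additive subgroups $a\cdot\wp(K)$, intersected with one another, and show that non-surjectivity of $\wp$ on $k_v$, or a non-trivial finite extension of $k_v$, produces an infinite chain of intersections $a_1\wp(K)\cap\cdots\cap a_n\wp(K)$ each of infinite index in the previous one. This contradicts the chain condition in rosy groups. I expect this additive chain construction, and the corresponding multiplicative one above, to be the main obstacle. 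It is exactly where Krupi\'nski needed superrosiness, whereas here boundedness from Corollary~\ref{cor:rosy_bounded} must substitute. I would try first to show that boundedness alone makes each successive index finite-versus-infinite controllable, so that plain rosiness, which forbids infinite chains of uniformly definable subgroups each of infinite index in its predecessor, suffices.
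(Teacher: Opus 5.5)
There is a genuine gap. The step that would actually give divisibility of $\Gamma_v$ and algebraic closedness of $k_v$ is only hoped for, and the mechanism you propose for it cannot work. The chain condition in rosy groups only forbids chains whose successive indices are \emph{infinite}. Every group you name has finite index, and this is forced by the very boundedness you are feeding in.

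Since $[K:\wp(K)]<\infty$ and $x\mapsto ax$ is an additive automorphism, each $a\wp(K)$ has finite index in $K$. Hence every finite intersection $a_1\wp(K)\cap\cdots\cap a_n\wp(K)$ has finite index, and every step of your additive chain is finite. Likewise each $(K^\times)^{\ell^n}$ has finite index in $K^\times$. Each $H_\gamma$, which is not even definable, contains $(K^\times)^\ell$, so it and any finite intersection of such groups again has finite index. No contradiction with rosiness can come out of these families.

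Your preliminary observations are correct: $\Gamma_v=p\Gamma_v$ by perfectness, and $\Gamma_v/\ell\Gamma_v$ and $k_v/\wp(k_v)$ are finite. But they are far from sufficient. A value group like $\Zz[1/p]$, or a finite residue field, satisfies all of them.

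The missing idea is Krupi\'nski's dichotomy, Fact~\ref{fact:k}. If $K$ is radically bounded, then either $K$ has a non-trivial definable valuation, or every non-trivial valuation has divisible value group and, in positive characteristic, algebraically closed residue field. The paper's proof is just the following combination:
\begin{enumerate}[leftmargin=*]
\item $K$ is bounded by Corollary~\ref{cor:rosy_bounded}.
\item A perfect bounded field is radically bounded by Fact~\ref{fact:r-bndd}. Radical boundedness needs finiteness of $L^\times/(L^\times)^n$ and $L/\wp(L)$ for every finite extension $L/K$, not just for $K$ as in your first paragraph. This is harmless, since finite extensions of a perfect bounded field are perfect and bounded.
\item A rosy field admits no non-trivial definable valuation, which rules out the first alternative of Fact~\ref{fact:k}.
\end{enumerate}

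Your closing remark also misplaces the role of superrosiness in Krupi\'nski's work. It was used to obtain radical boundedness of superrosy fields, not in the passage from radical boundedness to the conclusion. Boundedness from Corollary~\ref{cor:rosy_bounded} replaces exactly that input. The rest is Fact~\ref{fact:k} together with the absence of definable valuations in rosy fields; no new chain argument is needed.
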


Corollary~\ref{cor:k} follows from Facts~\ref{fact:r-bndd} and~\ref{fact:k}, and the trivial fact that a rosy field does not admit a non-trivial definable valuation.
When $\Chara(K) = p$ we let $\wp(x) = x^p - x$.
Recall that $K$ is {\bf radically bounded} if any finite extension $F/K$ satisfies the following:
\begin{enumerate}[leftmargin=*]
\item The group of nonzero $n$th powers has finite index in $F^\times$ for any $n \ge 2$,
\item If $\Chara(K)$ is positive then $\wp(F)$ is a finite index subgroup of the additive group of $F$.
\end{enumerate}

\begin{fact}\label{fact:r-bndd}
A perfect bounded field is radically bounded.
\end{fact}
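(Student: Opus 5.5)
We need to show: $K$ perfect and bounded implies, for every finite extension $F/K$, that $(F^\times)^n$ has finite index in $F^\times$ for all $n\ge 2$, and $\wp(F)$ has finite index in $(F,+)$ when $\Chara(K)=p>0$. The plan is to reduce to $F$ itself and then use Kummer and Artin--Schreier theory to turn finiteness of indices into finiteness of certain sets of abelian extensions, which boundedness supplies. First, a finite extension $F$ of a bounded perfect field is again bounded and perfect. Perfectness is clear, since finite extensions of perfect fields are perfect. For boundedness, every separable extension $L/F$ of degree $d$ has degree $d[F:K]$ over $K$. It is contained in the Galois closure over $K$ of a primitive element, so it sits inside the compositum of $F$ with some separable extension of $K$ of degree at most $d[F:K]$; since $K$ is perfect, everything is separable. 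There are only finitely many such extensions, and each finite extension has finitely many subfields, so $F$ is bounded. Hence it suffices to prove (1) and (2) for $F=K$.

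For (1), fix $n\ge 2$. Write $n=p^e m$ with $p\nmid m$ when $\Chara(K)=p>0$, and $m=n$ otherwise. Because $K$ is perfect, $(K^\times)^{p}=K^\times$, so $(K^\times)^n=(K^\times)^m$, and we may assume $\Chara(K)\nmid n$. Let $K'=K(\mu_n)$, a finite separable extension; it is bounded by the previous paragraph. By Kummer theory, subgroups $\Delta$ with $(K'^\times)^n\subseteq\Delta\subseteq K'^\times$ and $\Delta/(K'^\times)^n$ finite correspond to finite abelian extensions of exponent dividing $n$, and the cyclic subgroups correspond to cyclic extensions of degree dividing $n$. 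Boundedness gives only finitely many extensions of degree at most $n$. So $K'^\times/(K'^\times)^n$ has only finitely many cyclic subgroups, and since it has exponent $n$, it is finite. Then $K^\times/(K^\times)^n$ is finite because the map $K^\times/(K^\times)^n\to K'^\times/(K'^\times)^n$ has finite kernel. To see this, if $a=b^n$ with $b\in K'$, then $K(b)$ lies in $K'$; there are finitely many such subfields, and by Kummer theory over $K'$ restricted appropriately, the classes $a$ arising from each subfield form a finite set. A cleaner route is a norm argument: $N_{K'/K}(K'^{\times n})\subseteq K^{\times n}$ and $N(a)=a^{d}$ for $a\in K$ with $d=[K':K]$. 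Then $a^d\in(K^\times)^n$ for every $a$ in the kernel, so the kernel has exponent dividing $d$, and finiteness follows by combining this with finite generation of the relevant torsion, or more simply by replacing $n$ with $nd$ and using $(K^\times)^{nd}\subseteq (K^\times)^n$. I expect this descent from $K(\mu_n)$ to $K$ to be the main obstacle.

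For (2), with $\Chara(K)=p$, Artin--Schreier theory identifies the finite subgroups of $K/\wp(K)$, which is an $\mathbb{F}_p$-vector space, with the finite abelian extensions of exponent $p$. Each one-dimensional subspace gives a degree $p$ cyclic extension. There are finitely many of these, so $K/\wp(K)$ has finitely many lines and is therefore finite. No root-of-unity descent is needed here.
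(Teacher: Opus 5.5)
You handle everything except one step correctly: the reduction to $F=K$, the reduction to $\Chara(K)\nmid n$, the Kummer-theoretic proof that $K'^\times/(K'^\times)^n$ is finite for $K'=K(\mu_n)$, and the Artin--Schreier argument for (2). The gap is the step you flagged yourself: showing that the kernel of $K^\times/(K^\times)^n\to K'^\times/(K'^\times)^n$ is finite. Neither sketch closes it. The first asserts that each of the finitely many subfields $K(b)\subseteq K'$ contributes only finitely many classes $a=b^n$, but that is exactly the point at issue. The norm argument only shows that the kernel has exponent dividing $\gcd(d,n)$, and a group of bounded exponent can be infinite. There is no finite generation to appeal to. Replacing $n$ by $nd$ just reproduces the problem, because Kummer theory in exponent $nd$ needs $\mu_{nd}$, so you would pass to $K(\mu_{nd})$ with a new degree $d'$.

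The missing idea is a cocycle (inflation--restriction) argument. If $a=b^n$ with $a\in K^\times$ and $b\in K'$, then $\sigma\mapsto\sigma(b)/b$ is a function $\Gal(K'/K)\to\mu_n$. If $a_1=b_1^n$ and $a_2=b_2^n$ give the same function, then $b_1/b_2$ is $\Gal(K'/K)$-invariant, so it lies in $K$, and hence $a_1/a_2\in(K^\times)^n$. So the kernel has at most $n^{[K':K]}$ elements. Your norm idea can also be rescued by induction on $n$. The composite $K^\times/(K^\times)^n\to K'^\times/(K'^\times)^n\xrightarrow{N}K^\times/(K^\times)^n$ is $x\mapsto x^d$, so $(K^\times)^g/(K^\times)^n$ is finite, where $g=\gcd(d,n)$. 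Since $g\le d\le\varphi(n)<n$ and $g$ divides $n$, the induction hypothesis gives that $K^\times/(K^\times)^g$ is finite.

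The paper never adjoins roots of unity. It uses $K^\times/(K^\times)^n\cong H^1(\Gal_K,\mu_n)$ and $K/\wp(K)\cong H^1(\Gal_K,\Zz/p\Zz)$ directly. It then cites Serre: $H^1$ with finite coefficients is finite for a profinite group with finitely many open subgroups of each index. The inflation--restriction step inside Serre's proof is precisely the descent you are missing.
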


This is well-known but we could not find a reference.
A proof of the characteristic zero case is given in~\cite{FJ-bounded}.
We sketch a proof for the sake of completeness.

\begin{proof}
Bounded fields are closed under finite extensions, so it is enough show that (1) and (2) above hold in the case $F = K$.
Fix $n \ge 2$.
Let $\mu_n$ be the group of $n$th roots of unity in the algebraic closure of $K$ and let $P$ be the subgroup of $n$th powers in $K^\times$.
As $K$ is perfect we may suppose that $n$ is coprime to $\Chara(K)$ when $\Chara(K)$ is positive.
Let $\Gal_K$ be the absolute Galois group of $K$.
It is well-known that $H^1(\Gal_K, \mu_n)$ is isomorphic to $K^\times/P$ and $H^1(\Gal_K, \Zz/p\Zz)$ is isomorphic to $K/\wp(K)$ when $\Chara(K) = p$.
Both of these cohomology groups are finite when $K$ is bounded~\cite[III, \S 4]{Serre_gcoho}.
\end{proof}

Fact~\ref{fact:k} is a theorem of Krupi\'nski~\cite{Kr15}.

\begin{fact}\label{fact:k}
Suppose that $K$ is radically bounded.
Then one of the following holds.
\begin{enumerate}[leftmargin=*]
\item There is a non-trivial definable valuation on $K$.
\item If $v$ is a non-trivial valuation on $K$ then the value group of $v$ is divisible and if $K$ additionally has positive characteristic, then the residue field of $K$ is algebraically closed.
\end{enumerate}
\end{fact}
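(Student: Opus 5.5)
The plan is to argue by contraposition. Suppose $K$ is radically bounded and carries a non-trivial valuation $v$ for which (2) fails. We will produce a non-trivial \emph{definable} valuation on $K$. The only definable objects available are the groups that radical boundedness controls:
\begin{itemize}
\item the subgroups $P_n = (F^\times)^n$ of finite index in $F^\times$;
\item in characteristic $p$, the subgroup $\wp(F)$ of finite index in $(F,+)$;
\end{itemize}
for $F$ ranging over finite extensions of $K$, each interpretable in $K$. So the whole argument must extract a valuation ring from one of these definable finite-index subgroups, with $v$ serving only to witness non-triviality.

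First step: reduce to a single prime. If the value group $\Gamma_v$ is not divisible, pick a prime $\ell$ with $\Gamma_v \neq \ell\Gamma_v$. If $\Chara(K)=p$ and the residue field $Kv$ is not algebraically closed, Kummer theory and Artin--Schreier theory give a non-trivial cyclic extension of degree $\ell$ (possibly $\ell = p$) of $Kv$ or of a finite extension of $Kv$. Passing to a finite extension $F$ of $K$ and a prolongation of $v$ to $F$ preserves all hypotheses, and $F$ is interpretable in $K$. A definable valuation on $F$ restricts to a definable valuation on $K$, non-trivial because $F/K$ is algebraic. So we may adjoin a primitive $\ell$-th root of unity when $\ell \neq \Chara(K)$ and work with
\[
T = (F^\times)^\ell \text{ (Kummer case)} \qquad\text{or}\qquad T = \wp(F) \text{ (Artin--Schreier case, } \ell = p\text{)}.
\]
In both cases $T$ is definable and has finite index by radical boundedness.

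Second step: show that $T$ is a proper subgroup which is ``coarse'' with respect to $v$. In the Kummer case, an element $a$ with $v(a)\notin \ell\Gamma_v$ is not an $\ell$-th power, and the same holds for every element of $a\cdot \mathcal O_v^\times$. In the Artin--Schreier case, lifting an element of $Kv\setminus\wp(Kv)$ gives a coset of $\mathcal O_v$ avoiding $\wp(F)$, and one gets a similar statement for elements of negative value. From such statements we want to conclude that $T$ is not dense in the $v$-topology, or at least that $T$ fails to absorb some $v$-open set. This is the information we then feed into a criterion that produces a definable valuation ring.

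Third step, which I expect to be the main obstacle: turn a finite-index subgroup that is not dense into a definable valuation. I would invoke a Koenigsmann-type criterion. If $T \le F^\times$ has finite index and is not dense in some V-topology, then the set
\[
\mathcal O_T=\{x : x\,(1+\text{a suitable finite-index-controlled neighbourhood built from } T)\subseteq T\}
\]
is contained in a non-trivial valuation ring. Moreover, its coarsest such ring, or the ring generated by $\mathcal O_T$, is definable from $T$ with parameters. The finiteness of $F^\times/T$ is exactly what makes the construction first-order: quantifying over cosets amounts to quantifying over finitely many representatives. The additive case is handled the same way, using $(F,+)/\wp(F)$ finite and Koenigsmann's additive analogue.

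The delicate point is that $v$ itself need not be henselian or definable. The definable ring obtained will be some coarsening (or a comparable ring), and the real work is checking that this ring is non-trivial. That is where the non-density from the second step is used. Once this is done, alternative (1) holds, which finishes the proof.
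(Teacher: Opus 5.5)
The paper gives no proof of this statement; it quotes it as a theorem of Krupi\'nski~\cite{Kr15}. Judged on its own, your outline has a genuine gap. The third step carries the entire content of the theorem, and the criterion you state there is false.

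Your first two steps do produce what you claim: a definable finite-index subgroup $T$ that is not dense in the $v$-topology. The witness is $a\mathcal{O}_v^\times$ with $v(a)\notin\ell\Gamma_v$, or $b(1+M_v)$ with $\bar b$ not an $\ell$-th power in the residue field. In the Artin--Schreier case it is $b+M_v$, not a coset of $\mathcal{O}_v$ as you wrote, since $b+\mathcal{O}_v=\mathcal{O}_v$ contains $\wp(0)$.

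However, non-density of a finite-index definable subgroup does not yield a non-trivial definable valuation. Let $R$ be a non-archimedean real closed field and $v$ a non-trivial convex valuation on $R$.
\begin{itemize}
\item $R$ is radically bounded.
\item $T=R^{\times 2}$ is the positive cone, has index $2$, and misses the $v$-open set $-(1+M_v)$.
\item Yet $R$ is o-minimal, so its only definable additive subgroups are $\{0\}$ and $R$. Hence $R$ has no non-trivial definable valuation.
\end{itemize}
This is exactly your residue-field Kummer witness with $\ell=2$. The Fact survives it only because its residue-field clause is restricted to positive characteristic. So the non-triviality of whatever ring you extract cannot come from non-density alone, contrary to your last paragraph.

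A correct argument must use the specific shape of the witness, for instance $v(a)\notin\ell\Gamma_v$, or the absence of orderings in characteristic $p$. It must also write down an explicit first-order definition of a ring from $T$ and prove that this ring is a non-trivial valuation ring. Your $\mathcal{O}_T$ is not actually specified. Non-density by itself only says that the $v$-closure of $T$ is a proper open subgroup, and that closure is defined via $v$, not from $T$. Supplying this step is precisely what Krupi\'nski's theorem does. As it stands, your proposal reduces the Fact to an unproved black box, which in the form you give is incorrect.
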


\begin{proof}[Proof of Theorem~\ref{thm:field-orders}]
We may suppose that $\Chara(K) = 0$.
We need to prove the following.
\begin{enumerate}[leftmargin=*]
\item $K$ admits only finitely many field orders, each of which is definable.
\item If $<$ is a field order on $K$ then $K$ is dense in the real closure of $(K, <)$.
\item Any two open intervals with respect to distinct field orders intersect.
\end{enumerate}
Let $S$ be the set of non-zero squares in $K$.
By Corollary~\ref{cor:rosy_bounded} and Fact~\ref{fact:r-bndd} $K^\times/S$ is finite.
It follows that if $<$ is a field order on $K$ then $\{a \in K : a > 0\}$ is a finite union of cosets of $S$.
Note that (1) follows.
If $K$ is not dense in the real closure of $(K, <)$ then $(K, <)$ has a non-trivial definable valuation~\cite[Prop.~6.5]{JSW-field}, contradicting (1) and rosiness.
For (3), it suffices to show that distinct field orders on $K$ induce distinct topologies; 
see~\cite[Thm.~4.1]{Prestel1978}. This follows from (1) and Lemma~\ref{lem:rosy orders} below.
\end{proof}

Lemma~\ref{lem:rosy orders} is a definable version of the easy fact that if $<$ is an archimedean field order on $K$ then any field order on $K$ which induces the same topology as $<$ must agree with $<$.

\begin{lemma}\label{lem:rosy orders}
Let $K$ be a characteristic zero field and $<_1, <_2$ be distinct field orders on $K$ which induce the same topology.
Then there is a non-trivial $(K, <_1, <_2)$-definable $<_i$-convex valuation for $i = 1, 2$.
\end{lemma}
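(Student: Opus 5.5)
The plan is to show that $<_1$ is ``definably non-archimedean''. I will produce a $(K,<_1,<_2)$-definable $<_1$-convex subring $\mathcal O_1 \subsetneq K$ that properly contains $\Qq$-bounded elements only in a controlled way. Then I will repeat the construction symmetrically for $<_2$. A proper convex subring of an ordered field is automatically a valuation ring, since for $x \notin \mathcal O_1$ convexity gives $|x^{-1}|_1 < 1$, so $x^{-1} \in \mathcal O_1$. So what must be shown is definability, convexity, closure under $+$ and $\cdot$, and properness.

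First I extract the basic definable data. Since the orders are distinct, fix $a$ with $a >_1 0$ and $a <_2 0$. Since the topologies agree, the $<_1$-open interval $(-1,1)_1$ is $<_2$-open, so there is $e >_2 0$ with $(-e,e)_2 \subseteq (-1,1)_1$. Symmetrically there is $e' >_1 0$ with $(-e',e')_1 \subseteq (-1,1)_2$. Let $U$ be the $<_1$-convex hull of $(-e,e)_2$, namely
\[
U = \{x : |x|_1 \le_1 |b|_1 \text{ for some } b \in (-e,e)_2\}.
\]
Then $U$ is definable, $<_1$-convex, symmetric, and contained in $[-1,1]_1$.

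Next I form the stabilizer $S = \{x : xU \subseteq U\}$. It is definable and multiplicatively closed, and it is $<_1$-convex by convexity of $U$. It also contains $[-1,1]_1$. To get additive closure I will instead take $\mathcal O_1 = \{x : x(U+U) \subseteq U\}$ or a similar variant, or I will take the convex subring generated by $S$ if that is first-order expressible. Either way the point is to absorb the factor $2$ that appears in $|x+y|_1 \le 2\max(|x|_1,|y|_1)$.

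The key step, and the one I expect to be the main obstacle, is showing that the resulting ring is neither all of $K$ nor merely the set of $<_1$-archimedean-bounded elements. Its definability must come from genuine non-archimedean behaviour. Here I would use the element $a$ together with the fact that $(-e,e)_2$ is not $<_1$-convex. Sets of the form $(-\epsilon,\epsilon)_2 \cap \{x >_1 0\}$ contain elements that are $<_2$-negative, namely multiples of $a$ scaled into the neighbourhood. This witnesses that $U$ is strictly larger than $(-e,e)_2$ in a way that forces some $x$ with $|x|_1 \le 1$ and $xU \not\subseteq U$ to fail. More concretely, I would show that $U$ is bounded, as $U \subseteq [-1,1]_1$, so $\mathcal O_1 \ne K$. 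I would also show that $U$ contains elements $<_1$-much smaller than $1$ times elements of $\mathcal O_1$ that are $<_1$-larger than every integer. This second fact should follow by iterating the comparison between the two neighbourhood bases, using $e$ and $e'$ alternately. If this direct stabilizer approach stalls, the fallback is a contradiction argument. If every definable convex subring of this kind were trivial, then $\Qq$ would be $<_1$-dense with respect to the common topology in a definable sense, and the classical archimedean argument would show that $<_1$ and $<_2$ agree on the positive cone, contradicting the choice of $a$.
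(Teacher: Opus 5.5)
\textbf{Verdict: the proposal has a genuine gap.} You never produce a definable set that is closed under addition, and that is exactly where the non-archimedean content of the lemma has to enter.

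\textbf{Why your candidate sets are not rings.} The multiplicative stabilizer $S=\{x : xU\subseteq U\}$ is definable, $<_1$-convex and multiplicatively closed, but it need not be a ring. Take $K=\Qq(t)$ with $t$ a positive infinitesimal for $<_1$ and a negative one for $<_2$; both orders induce the $t$-adic topology. With $e=\tfrac12$, every element of $U$ has standard part in $[-\tfrac12,\tfrac12]$, so $\tfrac{3}{10}\in U$ but $\tfrac35\notin U$. Hence $1\in S$ but $2\notin S$. Likewise $\{x : x(U+U)\subseteq U\}$ contains $\tfrac12$ (by convexity) but not $1$. The convex subring generated by $S$ consists of the elements bounded by some $m s^n$ with $s\in S$ and $m,n\in\Nn$. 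That is a countable union of definable sets and is not definable in general.

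\textbf{The difficulty is misplaced.} Properness is trivial, since $U$ is bounded. Landing on the ring of $<_1$-archimedean-bounded elements would also be fine, provided it were definable. The step ``iterate $e$ and $e'$'' is not an argument. The fallback is circular: under these hypotheses both orders are necessarily non-archimedean, so $\Qq$ is never dense and the classical argument cannot be invoked. A definable substitute for that argument is precisely what the lemma asserts.

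\textbf{The missing idea: take an \emph{additive} stabilizer.} A stabilizer is automatically a group, so all the work goes into showing it is non-zero (or open) and proper. A non-trivial proper definable convex subgroup $H$ then yields the valuation ring $\{a : aH\subseteq H\}$.

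\textbf{How the paper does it.} The paper lets $\Gamma$ be the set of non-zero elements with the same sign for $<_1$ and $<_2$. This is an open subgroup of $K^\times$ containing $\pm1$, and it is not all of $K^\times$. The paper puts $P=\Gamma\cup(1+\Gamma)$ and shows that $G=\{b : P+b=P\}$ is open and $G\neq K$. The key estimate is $|a|_1,|b|_1\le c|a-b|_1$ whenever $a,b$ lie in distinct cosets of $\Gamma$. Then $\{b : [-b,b]_1\subseteq G\}$ is the required convex subgroup.

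\textbf{Your own $U$ can be rescued the same way.} Set $H=\{g : U+g=U\}$. This is a definable $<_1$-convex subgroup, and $H\neq K$ because $U\subseteq(-1,1)_1$. It remains to see $H\neq 0$.
\begin{itemize}
\item Suppose $H=0$. Then for each $\eta>_2 0$ there are $u\in(-e,e)_2$ and $u'$ lying $<_1$-above all of $U$ with $|u'-u|_2<_2\eta$.
\item Write $u'=\sigma(e+z)$ with $\sigma=\pm1$ and $0\le_2 z<_2\eta$.
\item For $0<_2 y<_2 2e$ the point $\sigma(e-y)$ lies in $(-e,e)_2$. Comparing $u'$ with these points gives $\sigma(y+z)>_1 0$.
\item Shrinking $\eta$ forces all of $(0,2e)_2$ to have a single $<_1$-sign. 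That sign is positive, since $(0,2e)_2$ contains small squares.
\item Rescaling by squares then shows the two orders coincide, a contradiction.
\end{itemize}
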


Let $(K, <)$ be an ordered field.
A valuation on $(K, <)$ is {\bf convex} if its valuation ring is convex.
Any non-trivial convex subring of $K$ is a valuation ring.
Our proof is motivated by the proof that the $\Sa E_K$-topology on $K$ is definably connected if and only if $K$ is  real or separably closed~\cite[Prop.~7.15]{firstpaper}.

\begin{proof}
We work in $(K, <_1, <_2)$ with respect to the topology induced by either $<_i$.
To simplify notation $<$ is $<_1$ and intervals/convex sets/absolute values are with respect to $<_1$.
If $H$ is a non-trivial definable convex additive subgroup then it is easy to see that $\{a \in K : aH \subseteq H \}$ is a non-trivial definable convex subring.
If $J$ is a non-trivial definable open additive subgroup then it is also easy to see that $\{ b \in K : [-b, b] \subseteq J\}$ is a non-trivial definable convex additive subgroup.
Hence it suffices to produce a non-trivial definable open additive subgroup.

\medskip
Let $\Gamma$ be the set of non-zero elements of $K$ which have the same sign with respect to $<_1$ and $<_2$.
Then $\Gamma$ is a definable open subgroup of $K^\times$ which contains $\pm 1$.
We have $\Gamma \ne K^\times$ as $<_1, <_2$ are distinct.
We only need these properties moving forwards.
Let $P = \Gamma \cup (1 + \Gamma)$.
Let $G$ be the additive stabilizer of $P$, i.e. the set of $b \in K$ such that $P + b = P$.
Then $G$ is a definable additive subgroup of $K$.
We show that $G$ is open and $G \ne K$.

\begin{Claim*}
Let $a, b$ range over $K^\times$.
There is a constant $c > 0$ such that if $a, b$ lie in distinct cosets of $\Gamma$ then $|a|, |b|$ are both bounded above by $c |a - b|$.
\end{Claim*}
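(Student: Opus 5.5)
The plan is to extract the constant directly from the openness of $\Gamma$ at the point $1$, and then use the fact that $\Gamma$ is a multiplicative subgroup to turn ``$a,b$ lie in distinct cosets'' into ``$a/b$ and $b/a$ lie outside $\Gamma$''.

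First, since $\Gamma$ is open in the topology induced by $<_1$ and $1 \in \Gamma$, there is $\varepsilon \in K$ with $0 < \varepsilon < 1$ such that the $<_1$-interval $(1-\varepsilon, 1+\varepsilon)$ is contained in $\Gamma$. Equivalently, every $u \in K$ with $|u - 1| < \varepsilon$ lies in $\Gamma$. This interval is nonempty and consists of nonzero elements, as $\varepsilon < 1$. I will set $c \coloneqq \varepsilon^{-1}$.

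Next, let $a, b \in K^\times$ lie in distinct cosets of $\Gamma$. Because $\Gamma$ is a subgroup of $K^\times$, this means $a/b \notin \Gamma$ and $b/a \notin \Gamma$. By the choice of $\varepsilon$ we get $|a/b - 1| \ge \varepsilon$. Multiplying through by $|b|$ gives $|a - b| \ge \varepsilon |b|$, that is, $|b| \le c|a-b|$. Running the same argument with $b/a$ in place of $a/b$ gives $|a - b| \ge \varepsilon|a|$, that is, $|a| \le c|a-b|$. This is exactly the Claim.

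I expect no real obstacle. The only point to keep straight is that openness of $\Gamma$ is taken with respect to the common topology, here read via $<_1$, which is the convention fixed at the start of the proof. The constant $c$ depends only on $\Gamma$, and is therefore definable from the chosen $\varepsilon$, as needed for the later definability arguments.
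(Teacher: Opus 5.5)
Your proof is correct and is essentially the paper's argument. The paper fixes $\delta>0$ with $[1-\delta,1+\delta]\subseteq\Gamma$ and notes that $a[1-\delta,1+\delta]=[a-\delta|a|,a+\delta|a|]$ lies in the coset $a\Gamma$, so $|a-b|>\delta|a|$ and $c=\delta^{-1}$ works (with $b$ handled by symmetry); this is the same as your observation that $b/a\notin\Gamma$ forces $|b/a-1|\ge\varepsilon$, just phrased by dividing rather than multiplying.
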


\begin{claimproof}

Fix $\delta > 0$ such that $[1 - \delta, 1 + \delta] \subseteq \Gamma$.
Then $a [1 - \delta, 1 + \delta] = [a - \delta |a|, a + \delta |a| ]$ is contained in the same coset of $\Gamma$ as $a$.
Hence if $a,b$ are in distinct cosets of $\Gamma$ then $|a - b| > \delta |a|$, so $\delta^{-1} |a-b| > |a|$.
Take $c = \delta^{-1}$.
\end{claimproof}

We show that $G \ne K$.
Equivalently: $P \ne K$.
If $\Gamma$ is bounded then $P$ is bounded, hence $P \ne K$.
Suppose that $\Gamma$ is unbounded.
The claim shows that if $a, a - 1$ are in distinct cosets of $\Gamma$ then $|a| \le c$.
Hence $\Gamma \setminus [-c, c] = (1 + \Gamma) \setminus [-c, c] = P \setminus [-c, c]$.
So it is enough to show that $K \setminus \Gamma$ intersects $K \setminus [-c, c]$.
This follows as every coset of $\Gamma$ is unbounded.

\medskip
It remains to show that $G$ is open.
It suffices to show that all sufficiently small elements of $K$ are in $G$.
Suppose otherwise.
For every $\varepsilon > 0$ there are  $a \in P, b \in K^\times \setminus P$ and $|a - b| < \varepsilon$.
Hence one of the following holds.
\begin{enumerate}[leftmargin=*]
\item For every $\varepsilon > 0$ there are $a_\varepsilon \in \Gamma, b_\varepsilon \in K^\times \setminus P$ such that $|a_\varepsilon - b_\varepsilon| < \varepsilon$.
\item For every $\varepsilon > 0$ there are $a_\varepsilon \in 1 + \Gamma, b_\varepsilon \in K^\times \setminus P$ such that $|a_\varepsilon - b_\varepsilon| < \varepsilon$.
\end{enumerate}
Suppose (1).
By the claim $|a_\varepsilon|, |b_\varepsilon|$ are both bounded above by $c | a_\varepsilon - b_\varepsilon | < c\varepsilon$ for all $\varepsilon > 0$.
Hence $a_\varepsilon, b_\varepsilon \to 0$ as $\varepsilon \to 0$.
However, $1 + \Gamma$ is a neighborhood of $0$ as $-1 \in \Gamma$.
Hence $b_\varepsilon$ is contained in $1 + \Gamma$ when $\varepsilon$ is sufficiently small, contradiction.
Suppose (2).
Then we have $a_\varepsilon \in 1 + \Gamma$ and $b_\varepsilon \notin 1 + \Gamma$, so $a_\varepsilon - 1$ and $b_\varepsilon - 1$ lie in distinct cosets of $\Gamma$.
Again applying the claim $|a_\varepsilon - 1|, |b_\varepsilon - 1|$ are both bounded above by $c| \varepsilon |$, hence $a_\varepsilon, b_\varepsilon \to 1$ as $\varepsilon \to 0$.
Again this is a contradiction as $\Gamma$ is a neighborhood of $1$.
\end{proof}

\bibliographystyle{amsalpha}
\bibliography{refs}

\end{document}